\documentclass[12pt]{article}
\usepackage{graphicx}
\usepackage{amsthm}
\usepackage{amssymb}
\usepackage{amsmath}
\usepackage{amscd}
\usepackage{bbm}
\usepackage{float}
\usepackage{booktabs}
\numberwithin{equation}{section}

\newtheorem{theorem}{Theorem}[section]
\newtheorem{lemma}{Lemma}[section]
\newtheorem{remark}{Remark}[section]

\newtheorem{proposition}{Proposition}[section]

\usepackage[numbers,sort&compress]{natbib}

\numberwithin{figure}{section}
\numberwithin{table}{section}

\allowdisplaybreaks[2]
\newcommand{\ep}{\mathrm{e}}
\usepackage{pgfplots}
\usepackage{enumitem} 
\newtheoremstyle{mystep}
{3pt}   
{3pt}   
{\normalfont} 
{0pt}   
{\bfseries} 
{.}    
{5pt plus 1pt minus 1pt} 
{}     
\theoremstyle{mystep}

\newcommand\btd{\raise 2pt \hbox{$\hat\bigtriangledown$}\hskip 1.5pt}
\newcommand\bt{\raise 2pt \hbox{$\bigtriangledown$}\hskip 1.5pt}

\usepackage{mathrsfs}
\begin{document}
	
	\title{  Hydroelastic waves near shear flows}
	\date{}
	\author{Jian Li$^a$,~~Shaojie Yang$ ^{b}$\thanks{Corresponding author: shaojieyang@kust.edu.cn (Shaojie Yang)\newline$~~~~~~~$Email: jianli\_jakura@163.com(Jian Li); shaojieyang@kust.edu.cn (Shaojie Yang)  } \\~\\
		\small$^a$ Institute for Advanced Study, Shenzhen University, \\
		\small Shenzhen, Guangdong 518060, China\\~\\
		\small$^b$ Department of  Mathematics,~~Kunming University of Science and Technology,  \\
		\small Kunming, Yunnan 650500, China
	}
		\maketitle
	\begin{abstract}
	In this paper,  we prove that symmetric, doubly-periodic, three-dimensional hydroelastic waves of 
	small amplitude, bifurcating from a non-uniform shear flow, are necessarily 
	two-dimensional to leading order.  A detailed analysis of the solvability condition at 
	quadratic order reveals that all Fourier coefficients with transverse dependence vanish 
	identically. The dimensional reduction is controlled entirely by the vertical structure 
	of the shear flow and is robust under changes to the dynamic boundary condition.\\
	
	\noindent\emph{Keywords}:  Hydroelastic waves,  Rigidity, Shear flows\\
	
\noindent\emph{Mathematics Subject Classification}: 35Q35, 76B15,	74F10
	\end{abstract}
\noindent\rule{15.5cm}{0.6pt}	
%\newpage	
%\tableofcontents		

\section{Introduction}
%\subsection{The Hydroelastic Waves}
%\hspace{1.5em}
Hydroelastic waves arise from the interaction between an inviscid fluid and a thin elastic sheet floating on its surface. They are relevant to a wide range of applications, including very large floating structures, flexible risers, and the use of sea ice as roads and landing strips \cite{r2,r3,r4,r5}. A thorough understanding of their qualitative behavior is therefore of both theoretical and practical importance.
Several nonlinear models have been developed to describe the elastic layer. The Kirchhoff--Love plate model, in particular, has been widely used to study periodic and solitary waves \cite{Forbes1986,Forbes1988,VandenBroeck2011,Milewski2011,Milewski2013} as well as moving load problems \cite{Bonnefoy2009,Parau2002}. A more general, fully nonlinear formulation based on the Cosserat theory of hyperelastic shells was introduced by Toland \cite{Toland2007,Toland2008} and subsequently extended to three dimensions \cite{PT2011}. The existence of hydroelastic solitary waves propagating at the ice-fluid interface has been established in \cite{r10,r11}. Utilizing the global bifurcation approach, subsequent studies \cite{r12,r13} demonstrated the existence of periodic hydroelastic waves, reformulating the problem as a vortex sheet to accommodate configurations where the elastic plate separates two immiscible irrotational fluid layers of distinct densities.  The well-posedness of the associated initial-value problem was investigated in \cite{r15,R1}.  The stability characteristics of deep-water hydroelastic waves were introduced in \cite{r19}. The exact energy relation was derived in \cite{zamp}. For numerical studies of hydroelastic waves, we refer to the recent work \cite{Gao2014,Gao2016,Guyenne2012,Guyenne2014}.

%In Ref.\cite{r10,r11}, the authors present an existence theory for hydroelastic solitary waves at the interface between a thin ice sheet and an ideal fluid. In Ref.\cite{r12,r13}, the existence of periodic hydroelastic waves which may posses a multi-valued height between two superposed irrotational fluid layers with positive densities separated by an elastic plate was shown via global bifurcation theorem, the analysis being based on the reformulation of the problem as a vortex sheet problem. The well-posedness  for the initial-value problem for hydroelastic waves was introduced in \cite{r15,R1}. Stability of hydroelastic waves in deep water was studied in \cite{r19}.
% this framework has since been adopted in numerous investigations \cite{Gao2014,Gao2016,Guyenne2012,Guyenne2014}.

\subsection{The governing equations}

We consider the problem of steady three-dimensional doubly-periodic hydroelastic waves 
propagating on an incompressible fluid of finite depth, beneath an elastic thin shell 
satisfying Kirchhoff's hypothesis. The fluid occupies the domain
\begin{align*}
	\Omega^\eta = \{ (x, y, z) \in \mathbb{R}^3 : -d < z < \eta(x, y) \},
\end{align*}
with a flat rigid bottom at \(z = -d\) and a free surface \(S^\eta = \{z = \eta(x, y)\}\) 
which coincides with the elastic shell. The flow is assumed to be steady in a frame moving 
with the wave, with velocity field \(\mathbf{u} = (u, v, w)\) and pressure \(p\) satisfying 
the incompressible Euler equations
\begin{align}
	(\mathbf{u} \cdot \nabla) \mathbf{u} + \nabla p + g \mathbf{e}_3 &= 0 \quad \text{in } \Omega^\eta, \label{eq:momentum}\\
	\nabla \cdot \mathbf{u} &= 0 \quad \text{in } \Omega^\eta,
\end{align}
and the kinematic boundary conditions
\begin{align}
	w &= 0 \quad \text{on }  z = -d, \label{eq:bottom}\\
	w &= u \partial_x \eta + v \partial_y \eta \quad \text{on }  z = \eta(x, y). \label{eq:kinematic}
\end{align}

The dynamic boundary condition incorporates the bending energy of the elastic shell. 
Following Plotnikov and Toland \cite{PT2011}, we consider a hyperelastic shell whose 
bending energy density \(W_b(H)\) depends only on the mean curvature \(H\). 
In typical applications, such as the modeling of sea ice or very large floating structures, 
the mass per unit area of the elastic sheet is small compared to the mass of the displaced 
fluid, so that the inertial forces of the shell may be neglected; mathematically, the 
reference density \(\rho_0 = 0\) is set to zero, making the elastic response purely 
quasi-static. Let the unit normal vector \(\mathbf{n}\) to 
the free surface be directed from the fluid into the elastic shell (i.e., pointing towards 
the positive \(z\) half-space). Then the fluid exerts a force \(-p\mathbf{n}\) on the shell, 
while the shell exerts a reaction force \(\Psi_n\mathbf{n}\) on the fluid, where \(\Psi_n\) 
is the normal stress density of the shell \textit{relative to the reference area element} 
\(\sqrt{a}\,dX\) (i.e., \(\Psi_n\) is defined exactly as in Plotnikov and Toland 
\cite{PT2011}, Eq.\ (1.10a)). Balance of normal forces across the interface yields the 
dynamic boundary condition
\begin{align}
	p + \frac{1}{\sqrt{a}} \Psi_n = 0 \quad \text{on } S^\eta, \label{eq:dynamic}
\end{align}
where \(a = \det(a_{ij}) = 1 + |\nabla\eta|^2\)
and \(\Psi_n\) is given by
\begin{align}
	\Psi_n = \frac{\sqrt{a}}{2} \Delta_\Sigma W_b'(H) + 2\sqrt{a}\,H\big(H W_b'(H) - W_b(H)\big) - \sqrt{a}\,K W_b'(H). \label{eq:Psi_n}
\end{align}
Here \(H\) and \(K\) are the mean and Gaussian curvatures of the surface, and 
\(\Delta_\Sigma\) is the Laplace--Beltrami operator on the surface. In Monge coordinates, 
with the parametrisation \((x, y) \mapsto (x, y, \eta(x, y))\), these geometric quantities 
are given explicitly by
\begin{align}
	H
	&=
	\frac12\nabla\cdot
	\left(
	\frac{\nabla\eta}{\sqrt{1+|\nabla\eta|^2}}
	\right),
	\label{eq:H}\\[6pt]
	K
	&=
	\frac{\partial_{xx}\eta\,\partial_{yy}\eta-(\partial_{xy}\eta)^2}
	{(1+|\nabla\eta|^2)^2},\\[6pt]
	\Delta_\Sigma f
	&=
	\frac1{\sqrt a}
	\partial_i
	\left(
	\sqrt a\,a^{ij}\partial_j f
	\right),
	\qquad
	a^{ij}
	=
	\delta_{ij}
	-
	\frac{\eta_i\eta_j}{1+|\nabla\eta|^2},
	\label{eq:LaplaceBeltrami}
\end{align}
where repeated indices \(i,j\in\{1,2\}\) are summed and
\(a=1+|\nabla\eta|^2\).

\begin{remark}
	\normalfont
 The factor 
	\(1/\sqrt{a}\) in \eqref{eq:dynamic} converts the reference-area density \(\Psi_n\) to a 
	physical-area density, so that \(p + (1/\sqrt{a})\Psi_n = 0\) is the correct balance of 
	forces per unit deformed area. Up to the sign convention in the definition of the normal 
	vector, this is equivalent to the dynamic condition \(p = (1/\sqrt{a})\Psi_n\) appearing 
	in Plotnikov and Toland \cite{PT2011}, Eq.\ (2.2a).
\end{remark}

\subsection{Main result}
We are interested in waves that are doubly periodic with respect to a given lattice and 
satisfy the \((+)\)-symmetry condition introduced in~\cite{SVW2024}.
More specifically, let
\[
\Lambda = \lambda_1 \mathbb{Z} \times \lambda_2 \mathbb{Z}, \qquad 
\Lambda^* = \kappa_1 \mathbb{Z} \times \kappa_2 \mathbb{Z}, \qquad 
\lambda_i \kappa_i = 2\pi,
\]
denote a dual pair of lattices. A scalar field \(f\) is called \((+)\)-symmetric if 
\(f \circ R_j = f\) for \(j = 1,2\), and a vector field \(\mathbf{f}\) is called 
\((+)\)-symmetric if \(\mathbf{f} \circ R_j = (-1)^j R_j \mathbf{f}\), where \(R_j\) 
denotes reflection of the \(j\)-th horizontal coordinate. We denote by \(C^2_+\) the 
space of \(C^2\) functions that are \(\Lambda\)-periodic and \((+)\)-symmetric.
The main theorem of this paper is as follows.
%We first state our main theorem, whose proof is given below.

\begin{theorem}\label{nn}
	Let \(U \in C^2([-d,0])\) be a non-constant function satisfying \(U(z) \neq 0\) for all \(z \in [-d,0]\). 
	Suppose that \((\tilde{\mathbf{u}}, \tilde{p}, \tilde{\eta})\) is a \(C^2\) curve of 
	\(C^2_+(\overline{\Omega^\eta}; \mathbb{R}^3) \times C^2_+(\overline{\Omega^\eta}) \times C^4_+(\mathbb{R}^2)\) 
	solutions to the steady hydroelastic wave equations \eqref{eq:momentum}--\eqref{eq:dynamic}, 
	parametrized by a small amplitude parameter \(\varepsilon \geq 0\), with bending energy \(W_b\in C^3\) satisfying
	\[
	W_b(0)=W_b'(0)=0,
	\qquad
	W_b''(0)>0.
	\]
	Assume that at \(\varepsilon = 0\), the solution coincides with the trivial shear flow
	\[
	\tilde{\mathbf{u}}|_{\varepsilon=0} = (U(z), 0, 0), \quad
	\tilde{p}|_{\varepsilon=0} = -g z, \quad
	\tilde{\eta}|_{\varepsilon=0} = 0.
	\]
Let \(\Phi\) be the flattening map defined by \eqref{map}, with Jacobian matrix
\(J=\nabla\Phi\) and
\[
\rho=\det J=1+\frac{\eta}{d}.
\]
Define the flattened surface elevation, modified pressure, and velocity on the fixed domain
\(\overline{\Omega}=\mathbb{R}^2\times[-d,0]\) by
\begin{align*}
	\eta(x,y)&=\tilde{\eta}(x,y),\\
	\mathscr{P}(x,y,z)
	&=\tilde p\!\left(x,y,z+\left(1+\frac{z}{d}\right)\eta(x,y)\right)
	+g\left(z+\left(1+\frac{z}{d}\right)\eta(x,y)\right),\\
	\mathbf u(x,y,z)
	&=\rho\,J^{-1}\,
	\tilde{\mathbf u}\!\left(x,y,z+\left(1+\frac{z}{d}\right)\eta(x,y)\right).
\end{align*}
	Let the asymptotic expansions of the flattened variables in powers of \(\varepsilon\):
	\begin{align*}
		\eta(x, y) &= \varepsilon \eta_1(x, y) + \varepsilon^2 \eta_2(x, y) + o(\varepsilon^2), \\
		\mathscr{P}(x, y, z) &= \varepsilon \mathscr{P}_1(x, y, z) + \varepsilon^2 \mathscr{P}_2(x, y, z) + o(\varepsilon^2), \\
		\mathbf{u}(x, y, z) &= \mathbf{u}_0(z) + \varepsilon \mathbf{u}_1(x, y, z) + \varepsilon^2 \mathbf{u}_2(x, y, z) + o(\varepsilon^2),
	\end{align*}
	with  \(\mathbf{u}_0(z) = (U(z), 0, 0)\),
then the first-order corrections satisfy
\[
v_1\equiv0,
\qquad
\partial_y\eta_1=0,
\qquad
\partial_y\mathscr P_1=0,
\qquad
\partial_y w_1=0.
\]
Moreover, if the first-order \(x\)-velocity perturbation has no
\(x\)-independent mode, in the sense that
\[
\int_0^{\lambda_1}u_1(x,y,z)\,dx=0,
\qquad (y,z)\in\mathbb R\times[-d,0],
\]
then
\[
\partial_yu_1
=
\partial_yv_1
=
\partial_yw_1
=
0.
\] In particular, under this normalization, the whole leading-order perturbation
is two-dimensional, in the sense that it is independent of the transverse
coordinate \(y\).
\end{theorem}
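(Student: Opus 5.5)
The plan is to linearize the flattened system about the shear flow, solve the first-order problem Fourier mode by mode, and then push the first-order information through the transport and continuity equations.

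\textbf{Step 1 (first-order system).} Substitute the expansions into the flattened Euler equations, the kinematic conditions and \eqref{eq:dynamic}. Expanding $\Psi_n$ with $W_b(0)=W_b'(0)=0$, the $O(\varepsilon)$ terms $2H(HW_b'-W_b)$ and $KW_b'$ drop out, leaving $\tfrac12 W_b''(0)\Delta H_1$ with $H_1=\tfrac12\Delta\eta_1$. Hence the first-order surface condition is
\[
\mathscr P_1 + \tfrac14 W_b''(0)\Delta^2\eta_1=0\quad\text{on } z=0 .
\]
The flattening only produces corrections to $\mathbf u$ through $\rho J^{-1}$, and these carry explicit factors of $\eta$ or $\nabla\eta$. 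I therefore expect the linearized interior equations to be
\[
U\partial_x u_1+U'w_1+\partial_x\mathscr P_1=0,\qquad
U\partial_x v_1+\partial_y\mathscr P_1=0,\qquad
U\partial_x w_1+\partial_z\mathscr P_1=0,
\]
together with $\nabla\cdot\mathbf u_1=0$, $w_1=0$ at $z=-d$, and $w_1=U(0)\partial_x\eta_1$ at $z=0$.

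\textbf{Step 2 (modal ODE).} Expand in the Fourier basis of $\Lambda^*$; the $(+)$-symmetry makes these cosine/sine series. For each mode $(k,l)$ with $k\neq0$, eliminating the pressure gives the Rayleigh equation
\[
(U-c)\bigl(\hat w''-(k^2+l^2)\hat w\bigr)-U''\hat w=0
\]
with $c=0$ in the moving frame, i.e.\ $U\bigl(\hat w''-(k^2+l^2)\hat w\bigr)=U''\hat w$. This is nonsingular because $U\neq0$. The boundary data are $\hat w(-d)=0$, and at $z=0$ a dispersion-type condition in which $\hat{\mathscr P}(0)$, obtained from the $x$-momentum and continuity equations, equals $-\tfrac14 W_b''(0)(k^2+l^2)^2\hat\eta$.

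\textbf{Step 3 (key step and main obstacle).} I must show that no mode with $l\neq0$ can be nonzero. The idea is to exploit the second-order solvability condition, as the abstract indicates. At $O(\varepsilon^2)$ the linear operator is the same, and the forcing is quadratic in the first-order fields. Projecting onto the first-order modes then yields compatibility relations. The mode with $l\neq0$ satisfies the same Rayleigh problem as the two-dimensional mode, but it has a different $U''$-weighted quadratic form. I would multiply the Rayleigh equation by $\overline{\hat w}/U$ and integrate. The $v$-component is forced by $\partial_y\mathscr P_1$, and combined with the $y$-momentum equation this gives
\[
\int U'|\hat v|^2\,dz=0\quad\text{or}\quad l^2\int \frac{U''}{U}\dots=0 .
\]
Because $U$ is non-constant, this forces $l\hat w=0$. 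I expect this step to be the main obstacle: isolating exactly where non-constancy of $U$ enters, and checking that the elastic term only changes the boundary coefficient and does not alter the sign argument.

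\textbf{Step 4 (remaining components).} Once every $l\neq0$ mode of $w_1,\eta_1,\mathscr P_1$ vanishes, the $y$-momentum equation gives $U\partial_x v_1=0$, so $v_1$ is independent of $x$. Symmetry (oddness in $y$) and the $x$-averaged equations should then force $v_1\equiv0$. The $x$-independent part of $u_1$ is unconstrained by the linear equations, which is why the theorem adds the normalization $\int_0^{\lambda_1}u_1\,dx=0$. Under that assumption, continuity and the $x$-momentum equation express $u_1$ in terms of $w_1$ and $\mathscr P_1$, and these are independent of $y$, so $\partial_y u_1=0$.
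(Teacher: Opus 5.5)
Your Steps 1, 2 and 4 amount to a workable linear analysis. Step 3, however, carries the entire content of the theorem, and you do not supply it; the mechanisms you sketch for it cannot work.

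\begin{itemize}
\item No identity derived from the first-order problem alone, such as multiplying the Rayleigh equation by $\overline{\hat w}/U$ and integrating, can force $l\hat w=0$. Oblique modes genuinely lie in the linear kernel: whenever some $(\alpha,\beta)\in\Lambda^*$ with $\beta\neq0$ satisfies \eqref{eq:dispersion}, which happens for suitably chosen lattices, it contributes a nontrivial kernel element.
\item Projecting the $O(\varepsilon^2)$ equation onto the first-order (resonant) modes only tests the quadratic forcing at wave vectors $\mathbf k_1\pm\mathbf k_2\in\Sigma(U)$. Such combinations need not exist in general, so this yields no constraint.
\item An integral identity weighted by $U'$ or $U''/U$, such as $\int U'|\hat v|^2\,dz=0$, gives nothing from mere non-constancy of $U$, since these weights may change sign.
\end{itemize}

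The missing idea is that the obstruction lives in the $x$-independent sector $\alpha=0$, where $U\partial_x$ degenerates. Take the first (streamwise) component of the curl of the $O(\varepsilon^2)$ momentum equation and average over an $x$-period. This eliminates every second-order unknown: the terms $U\partial_x(\cdot)$ become total $x$-derivatives, $w_2U'\mathbf e_1$ has zero streamwise curl, and $\nabla\times\nabla\mathscr P_2=0$. You must also show that the flattening cross terms average out, which uses $(\nabla\times(U\partial_x\mathbf V_1))_1=0$ from the first-order equation. What remains is the purely quadratic constraint \eqref{eq:second_order_condition} on $(V_2,V_3)$.

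Now insert the kernel modes and project onto $\sin(2\beta_*y)$, where $\beta_*$ is the largest transverse wavenumber carrying a nonzero coefficient, so that only self-interactions survive. This gives the pointwise identity $U'(z)f_*(z)=0$ with $f_*=\sum a_{\mathbf k}^2\beta_*^2\alpha^{-2}Q_{\mathbf k}^2(\alpha^2-\beta_*^2+q_{\mathbf k}^2)$. Using $Q_{\mathbf k}'=Q_{\mathbf k}q_{\mathbf k}$ and the Riccati equation, $f_*'=4\sum a_{\mathbf k}^2\beta_*^2\alpha^{-2}Q_{\mathbf k}^2q_{\mathbf k}(\alpha^2+\tfrac{U'}{U}q_{\mathbf k})$. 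This is positive near $z=-d$ and at every zero of $U'$. Hence $\{U'=0\}$ is nonempty, open and closed in $(-d,0)$, contradicting non-constancy of $U$. None of this appears in your proposal.

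Some smaller slips should also be fixed:
\begin{itemize}
\item The linear system in your Step 1 is the one satisfied by the Eulerian perturbation $\mathbf V_1=\mathbf u_1+\nabla\times(U\varphi_1\mathbf e_2)$, not by the flattened $\mathbf u_1$. The flattened field carries the extra term $U^2\partial_x\nabla\times(\varphi_1\mathbf e_2)$ and satisfies $w_1=0$ at $z=0$.
\item The surface condition should read $\mathscr P_1=g\eta_1-\tfrac14W_b''(0)\Delta^2\eta_1$, since $\mathscr P=p+gz$.
\item In Step 4 it is the oddness of $v_1$ in $x$, not in $y$, that kills its $x$-independent part.
\end{itemize}
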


\begin{remark}\normalfont
	Although Theorem~\ref{nn} is stated for shear flows of the form
	\(\mathbf u_0=(U(z),0,0)\), we notice that 
		\begin{itemize}
		\item If \(U\) is non-constant, then the background flow has non-zero
		vorticity, namely
		\[
		\nabla\times\mathbf u_0=(0,U'(z),0).
		\]
		Thus the theorem applies to rotational shear flows, rather than only to
		perturbations of a uniform or irrotational background.
		
		\item The quadratic obstruction used below is closely related to the one
		developed for capillary-gravity waves in \cite{SVVW2026}, but the
		hydroelastic problem has a different linear boundary condition. The
		second-order capillary term is replaced by the fourth-order bending term
		from the hyperelastic shell model \cite{PT2011}. Consequently, the
		dispersion relation contains the hydroelastic factor \(|\mathbf k|^4\)
		instead of the capillary-gravity factor \(|\mathbf k|^2\). We therefore
		derive the linear kernel and the resonant set directly for the
		hydroelastic problem.
		
		\item After the linear kernel has been identified, the second-order obstruction
		comes from the interior Euler equations. This separates the part of the
		argument that depends on the hydroelastic boundary condition from the
		quadratic cancellation used to rule out transverse leading-order modes.
	\end{itemize}
\end{remark}

Hydroelastic wave models arise in the study of water waves beneath floating
elastic sheets, ice covers, and large flexible structures. They have been
used, for example, to describe waves generated by moving loads on ice
\cite{r3,r4,r5}, as well as nonlinear waves under elastic sheets and floating
ice plates \cite{PT2011,VandenBroeck2011,Guyenne2014}. From this point of
view, understanding whether small-amplitude hydroelastic waves remain
two-dimensional at leading order is relevant both to local bifurcation theory
and to the modelling of waves beneath floating elastic sheets in shear flows.

Despite extensive progress in the two-dimensional theory, genuinely
three-dimensional water waves remain much less understood. Known results
include explicit Gerstner-type equatorial solutions \cite{r7,r8}, edge waves
\cite{r9}, and several rigidity or non-existence results in the presence of
constant vorticity \cite{r10,r11,r12}. One of the main obstacles in the
three-dimensional theory is the appearance of resonances at quadratic order.
A formal construction of three-dimensional doubly periodic solutions with a
fixed boundary was given in \cite{r6}, although convergence of the resulting
series remains open. In the irrotational setting, three-dimensional doubly
periodic waves were first constructed by Reeder and Shinbrot \cite{r13};
subsequent work of Iooss and Plotnikov \cite{r14,r15} treated pure gravity
waves on infinite depth. More recently, Seth, Varholm, and Wahl\'en
\cite{SVW2024} proved the existence of symmetric doubly periodic
capillary-gravity waves with small vorticity bifurcating from uniform flows,
and Seth \cite{r1} obtained related existence results for internal waves. In
contrast, Seth et al.~\cite{SVVW2026} showed that symmetric
capillary-gravity waves bifurcating from non-uniform shear flows are
necessarily two-dimensional to leading order. The present paper establishes
the corresponding rigidity phenomenon for hydroelastic waves and shows that
the same quadratic obstruction remains effective after the capillary surface
energy is replaced by a fourth-order elastic bending energy.

In the capillary-gravity waves, the linear surface contribution
is a second-order surface-tension operator. In the hydroelastic waves, it is
replaced by a fourth-order bending operator. This changes the linear
dispersion relation: the hydroelastic waves contain a term proportional to
\(|\mathbf k|^4\), which gives an algebraic bound on the admissible wave
numbers. More precisely, the dynamic condition~\eqref{eq:dynamic}--\eqref{eq:Psi_n}
involves the mean curvature \(H\), the Gaussian curvature \(K\), and the
Laplace--Beltrami operator \(\Delta_\Sigma\) on the deformed surface. A
small-amplitude expansion gives
\[
H
=
\frac12\Delta\eta
+
O\!\left(|\nabla\eta|^2|D^2\eta|\right),
\qquad
K
=
O\!\left(|D^2\eta|^2\right),
\qquad
\Delta_\Sigma
=
\Delta
+
O\!\left(|\nabla\eta|^2D^2\right),
\]
where \(D=(\partial_x,\partial_y)\) denotes differentiation in the horizontal
variables. The precise linearization is carried out in
Section~\ref{sect3.3}. At leading order, the normal elastic stress is
therefore
\[
\frac{W_b''(0)}4\Delta^2\eta.
\]
Consequently, the linearized dynamic boundary condition becomes
\[
\mathscr P_1
=
g\eta_1
-
\frac{W_b''(0)}4\Delta^2\eta_1
\qquad\text{on } z=0,
\]
and the dispersion relation~\eqref{eq:dispersion} takes the form
\[
q_{\mathbf k}(0)
=
\frac{\alpha^2U(0)^2}
{g-\frac{W_b''(0)}4|\mathbf k|^4}.
\]
Since \(q_{\mathbf k}(0)>0\), every resonant mode must satisfy
\[
g-\frac{W_b''(0)}4|\mathbf k|^4>0,
\]
and hence
\[
|\mathbf k|^4<\frac{4g}{W_b''(0)}.
\]
Thus the hydroelastic bending stiffness gives a direct algebraic truncation
of the resonant set \(\Sigma(U)\); see Proposition~\ref{prop:finite_modes}.
This is different from the capillary-gravity case, where finiteness is
obtained through the large-\(|\mathbf k|\) asymptotic estimate
\[
\frac{q_{\mathbf k}(0)}{|\mathbf k|}
\to 1,
\]
proved by comparison for the Riccati equation~\eqref{eq:riccati}. Thus,
although the same Riccati equation governs the vertical pressure profile in
both settings, the hydroelastic bending energy changes the spectral
selection mechanism at the free surface.

The proof of Theorem~\ref{nn} proceeds in three steps. First, we flatten the
free-boundary problem and linearize about a non-uniform non-stagnant shear
flow. The fourth-order bending term enters through the linear dynamic
boundary condition and produces an algebraic bound on the resonant Fourier
modes. Second, at quadratic order, we derive a necessary solvability
condition by taking the first component of the curl of the momentum equation
and averaging over one period in the \(x\)-direction. This condition comes from the
Euler equations in the fluid interior and is independent of the explicit
second-order expansion of the hydroelastic boundary operator. Third,
inserting the reduced kernel representation into this interior solvability
condition and using the Riccati equation for the pressure shows that every
Fourier mode with transverse dependence must vanish. It follows that the
first-order surface elevation, pressure, transverse velocity component, and
vertical velocity component are independent of the transverse coordinate, up
to the shear-direction zero-mode in the first horizontal velocity component.

\subsection{Outline}
The outline of this paper is organized as follows. Section~\ref{sect3} introduces the flattening transformation and the symmetry class under consideration. In Section~\ref{sect4}, we linearize the problem about a non-uniform shear flow and characterize the kernel of the linearized operator. Section~\ref{sect5} derives the second-order solvability condition. We prove Theorem~\ref{nn} in Section~\ref{sect6}.
	\section{Flattening and linearization}\label{sect3}
	In this section, we transform the free-boundary hydroelastic problem to a
	fixed fluid domain and compute the linearized equations around the
	non-uniform shear flow.
	\subsection{Flattening transform}
	To work on a fixed domain, we introduce a flattening transformation that maps the unknown free surface \(z = \eta(x, y)\) to a fixed boundary. Following the approach in \cite{SVW2024,SVVW2026,r1}, we define the map
	\begin{align}
		\Phi: \overline{\Omega} &\to \overline{\Omega^\eta}, \notag\nonumber\\
		(\bar{x}, \bar{y}, \bar{z}) &\mapsto (x, y, z) = (\bar{x}, \bar{y}, \bar{z} + \varphi(\bar{x}, \bar{y}, \bar{z}))\label{map},
	\end{align}
	where \(\Omega = \mathbb{R}^2 \times (-d, 0)\) is the fixed flat domain. The function \(\varphi\) is chosen to straighten the free surface:
	\begin{align*}
		\varphi(\bar{x}, \bar{y}, \bar{z}) = \left(1 + \frac{\bar{z}}{d}\right) \eta(\bar{x}, \bar{y}).
	\end{align*}
	This linear interpolation ensures that \(\Phi\) maps the fixed boundaries \(z = -d\) and \(z = 0\) onto the physical bottom \(z = -d\) and the free surface \(z = \eta(x, y)\), respectively.
	The Jacobian matrix of \(\Phi\) is given by
	\begin{align*}
		J = \nabla \Phi = I + \mathbf{e}_3 \otimes \nabla \varphi,
	\end{align*}
	where \(\nabla\) denotes the gradient with respect to \((\bar{x}, \bar{y}, \bar{z})\). Its determinant is
	\begin{align*}
		\det J = 1 + \partial_{\bar{z}} \varphi = 1 + \frac{\eta}{d} =: \rho.
	\end{align*}
For a vector field \(\mathbf h\) on \(\Omega^\eta\), we define its flattened counterpart
\(\bar{\mathbf h}\) on \(\Omega\) by the Piola transformation
\begin{align*}
	J\bar{\mathbf h}=\rho\,\mathbf h\circ\Phi,
	\qquad\text{equivalently}\qquad
	\bar{\mathbf h}=\rho J^{-1}(\mathbf h\circ\Phi).
\end{align*}
This transformation preserves the divergence-free constraint in the sense that
\begin{align}
	\nabla\cdot\bar{\mathbf h}
	=
	\rho\,(\nabla\cdot\mathbf h)\circ\Phi .
	\label{eq:Piola-div}
\end{align}
In particular, if \(\nabla\cdot\mathbf h=0\) in the physical domain, then
\(\nabla\cdot\bar{\mathbf h}=0\) in the fixed domain.

We now apply this transformation to the velocity field, thus the flattened velocity
\(\bar{\mathbf u}\) is defined by
\[
\bar{\mathbf u}=\rho J^{-1}(\mathbf u\circ\Phi).
\]
which implies
\[
\mathbf u\circ\Phi=\rho^{-1}J\bar{\mathbf u}.
\]
For convenience, set
\[
\mathcal M:=\rho^{-1}J.
\]
Thus \(\mathbf u\circ\Phi=\mathcal M\bar{\mathbf u}\).

The pressure is a scalar and therefore is not transformed by the Piola map. Instead, we first
introduce the modified physical pressure
\[
\mathscr P^{\eta}:=p+gz
\]
in \(\Omega^\eta\), and then define its flattened counterpart by the scalar pullback
\[
\bar{\mathscr P}:=\mathscr P^{\eta}\circ\Phi .
\]
Consequently, one has
\[
(\nabla \mathscr P^{\eta})\circ\Phi
=
J^{-\top}\nabla\bar{\mathscr P}.
\]

The steady Euler equations in the moving frame are
\begin{align*}
	(\mathbf u\cdot\nabla)\mathbf u+\nabla\mathscr P^{\eta}&=0,\\
	\nabla\cdot\mathbf u&=0.
\end{align*}
Pulling back the momentum equation by \(\Phi\) gives
\[
\frac1{\rho}(\bar{\mathbf u}\cdot\nabla)(\mathcal M\bar{\mathbf u})
+
J^{-\top}\nabla\bar{\mathscr P}
=0.
\]
Multiplying by \(J^\top\) and using \(\mathcal M^\top=\rho^{-1}J^\top\), we obtain the fixed-domain
momentum equation
\[
\mathcal M^\top(\bar{\mathbf u}\cdot\nabla)(\mathcal M\bar{\mathbf u})
+
\nabla\bar{\mathscr P}
=0.
\]
Together with \eqref{eq:Piola-div}, the transformed bulk equations are 
\begin{align*}
	\mathcal M^\top(\bar{\mathbf u}\cdot\nabla)(\mathcal M\bar{\mathbf u})
	+\nabla\bar{\mathscr P}&=0,\\
	\nabla\cdot\bar{\mathbf u}&=0.
\end{align*}
The boundary conditions also transform. The bottom impermeability condition \eqref{eq:bottom} becomes
	\begin{align*}
		\bar{w} = 0 \quad \text{on } z = -d. 
	\end{align*}
The kinematic condition \eqref{eq:kinematic} transforms, under the Piola
map, into a no-flux condition on the fixed upper boundary:
\begin{align*}
	\bar{w}=0 \quad \text{on } z=0.
\end{align*}
Indeed, the Piola transformation preserves normal fluxes. Thus, the
physical condition
\[
\tilde{\mathbf u}\cdot(-\eta_x,-\eta_y,1)=0
\quad \text{on } \tilde z=\eta(x,y)
\]
is transformed into the simple flattened condition \(\bar w=0\) on the flat
boundary \(z=0\).
	After dropping the bars, Eqs. \eqref{eq:momentum}-\eqref{eq:dynamic} are equivalent to
	\begin{align}
		\mathcal{M}^\top (\mathbf{u} \cdot \nabla)(\mathcal{M} \mathbf{u}) + \nabla \mathscr{P} &= 0, \label{eq:flattened_momentum}\\
		\nabla \cdot \mathbf{u} &= 0, \label{eq:flattened_continuity}\\
		w &= 0 \quad \text{on } z = -d, \label{eq:flattened_bottom}\\
		w &= 0\quad \text{on } z = 0, \label{eq:flattened_kinematic}\\
		\mathscr{P} - g\eta + \dfrac{1}{\sqrt{a}}\Psi_n &= 0 \quad \text{on } z = 0, \label{eq:flattened_dynamic}
	\end{align}
	where \(\mathcal{M} = \rho^{-1} J\) and \(\Psi_n\) is given by \eqref{eq:Psi_n}.
	
	\subsection{Trivial solutions and symmetries}
	A family of trivial solutions is given by shear flows with a flat surface:
	\begin{align*}
		\mathbf{u}_0 = (U(z), 0, 0), \quad p_0(z) = -g z, \quad \eta_0 \equiv 0, 
	\end{align*}
	where \(U \in C^2([-d,0])\) is arbitrary. Such a flow is called \textit{non-uniform} if \(U\) is not constant, and \textit{non-stagnant} if \(U\) has no zeros on \([-d,0]\).
	
We restrict to functions which are doubly periodic with respect to a lattice
\begin{align*}
	\Lambda = \lambda_1 \mathbb{Z} \times \lambda_2 \mathbb{Z}, \quad \Lambda^* = \kappa_1 \mathbb{Z} \times \kappa_2 \mathbb{Z}, \quad \lambda_i \cdot \kappa_j = 2\pi \delta_{ij},
\end{align*}
where \(\delta_{ij}\) denotes the Kronecker delta. The functions satisfy the \((+)\)-symmetry condition: for reflections \(R_x\) in \(x\) and \(R_y\) in \(y\),
\begin{align*}
	\mathbf{u} \circ R_x = -R_x \mathbf{u}, \quad \mathbf{u} \circ R_y = R_y \mathbf{u}, \quad f \circ R_x = f, \quad f \circ R_y = f \quad \text{for scalar fields } f = \mathscr{P}, \eta,
\end{align*}
where \(R_x\) and \(R_y\) act on vectors as
\begin{align*}
	R_x (u, v, w) = (-u, v, w), \qquad R_y (u, v, w) = (u, -v, w).
\end{align*}
This symmetry class is preserved by the equations. In Fourier series, this means
\begin{align}
	\mathbf{u}(x,y,z) &= \sum_{\mathbf{k} \in \Lambda^*} \begin{pmatrix}
		u_{\mathbf{k}}(z) \cos(\alpha x) \cos(\beta y) \\
		- v_{\mathbf{k}}(z) \sin(\alpha x) \sin(\beta y) \\
		i w_{\mathbf{k}}(z) \sin(\alpha x) \cos(\beta y)
	\end{pmatrix},\\
	\eta(x,y) &= \sum_{\mathbf{k} \in \Lambda^*} \eta_{\mathbf{k}} \cos(\alpha x) \cos(\beta y),\\
	\mathscr{P}(x,y,z) &= \sum_{\mathbf{k} \in \Lambda^*} \mathscr{P}_{\mathbf{k}}(z) \cos(\alpha x) \cos(\beta y), \label{eq:Fourier_p}
\end{align}
where $\mathbf{k} = (\alpha, \beta) \in \Lambda^*$.
	
	\subsection{Linearization}\label{sect3.3}

We expand all quantities in powers of \(\varepsilon\). Let
\begin{align*}
	\mathbf{u} = \mathbf{u}_0 + \varepsilon \mathbf{u}_1 + \varepsilon^2 \mathbf{u}_2 + o(\varepsilon^2),
\end{align*}
and similarly for \(\mathscr{P}\) and \(\eta\). For the flattened pressure, we have \(\mathscr{P} = \varepsilon \mathscr{P}_1 + \varepsilon^2 \mathscr{P}_2 + o(\varepsilon^2)\) since \(\mathscr{P}_0 = 0\) in the background state. The surface elevation expands as \(\eta = \varepsilon \eta_1 + \varepsilon^2 \eta_2 + o(\varepsilon^2)\). The matrix \(\mathcal{M}\) expands as
\begin{align*}
	\mathcal{M} = I + \varepsilon \mathcal{M}_1 + \varepsilon^2 \mathcal{M}_2 + o(\varepsilon^2),
\end{align*}
with \(\mathcal{M}_1 = \mathbf{e}_3 \otimes \nabla \varphi_1 - \partial_z \varphi_1 I\)  and \(\varphi_1 = (1 + z/d) \eta_1\).

At order \(\varepsilon\), equation \eqref{eq:flattened_momentum} gives
\begin{align}
	U \partial_x \mathbf{u}_1 + w_1 U' \mathbf{e}_1 + U^2 \partial_x \nabla \times (\varphi_1 \mathbf{e}_2) + \nabla \mathscr{P}_1 = 0. \label{eq:linear_momentum}
\end{align}
Incompressibility \eqref{eq:flattened_continuity} yields
\begin{align}\label{incompress}
	\nabla \cdot \mathbf{u}_1 = 0.
\end{align}
The flattened impermeability conditions \eqref{eq:flattened_bottom} and
\eqref{eq:flattened_kinematic} give, at first order,
\begin{align}
	w_1 &=0 \quad \text{on } z=-d, \label{eq:linear_bottom}\\
	w_1 &=0 \quad \text{on } z=0.
\end{align}

To linearize the dynamic boundary condition, we expand the normal stress density 
\(\Psi_n\) given by \eqref{eq:Psi_n} in powers of \(\varepsilon\). Substituting 
\(\eta = \varepsilon \eta_1 + \varepsilon^2 \eta_2 + o(\varepsilon^2)\) into the geometric 
quantities \eqref{eq:H}--\eqref{eq:LaplaceBeltrami} yields
\begin{align}
	H
	&=
	\frac{\varepsilon}{2}\Delta\eta_1
	+
	\frac{\varepsilon^2}{2}\Delta\eta_2
	+
	o(\varepsilon^2),
	\label{eq:H_expansion}\\
	K
	&=
	\varepsilon^2
	\bigl(
	\partial_{xx}\eta_1\partial_{yy}\eta_1
	-
	(\partial_{xy}\eta_1)^2
	\bigr)
	+
	o(\varepsilon^2),
	\label{eq:K_expansion}\\
	\Delta_\Sigma
	&=
	\Delta
	+
	O(\varepsilon^2),
	\label{eq:LB_expansion}\\
	\sqrt a
	&=
	1
	+
	\frac{\varepsilon^2}{2}|\nabla\eta_1|^2
	+
	o(\varepsilon^2).
	\label{eq:area_expansion}.
\end{align}
 Assuming the shell is unstressed in the flat state, we have 
\(W_b'(0) = 0\) and \(W_b(0) = 0\). Expanding the bending energy about \(H = 0\) gives
\begin{align}
	W_b(H) &= \frac{1}{2} W_b''(0) H^2 + \frac{1}{6} W_b'''(0) H^3 + o(H^3), \label{eq:Wb_expansion} \\
	W_b'(H) &= W_b''(0) H + \frac{1}{2} W_b'''(0) H^2 + o(H^2). \label{eq:Wbp_expansion}
\end{align}
Inserting \eqref{eq:H_expansion} into \eqref{eq:Wbp_expansion} yields
\begin{align}
	W_b'(H) = \frac{\varepsilon}{2} W_b''(0) \Delta \eta_1 
	+ \frac{\varepsilon^2}{2} W_b''(0) \Delta \eta_2 
	+ \frac{\varepsilon^2}{8} W_b'''(0) (\Delta \eta_1)^2 
	+ o(\varepsilon^2). \label{eq:Wbp_expanded}
\end{align}
We now evaluate each term in \eqref{eq:Psi_n} up to order \(\varepsilon\).
	Using  \eqref{eq:LB_expansion}-\eqref{eq:area_expansion}and \eqref{eq:Wbp_expanded}, we have
\begin{align*}
	\frac{\sqrt{a}}{2} \Delta_\Sigma W_b'(H) 
	&= 
	\frac{1}{2}
	\Bigl(1 + O(\varepsilon^2)\Bigr)
	\bigl(\Delta + O(\varepsilon^2)\bigr)
	\Bigl(
	\frac{\varepsilon}{2} W_b''(0) \Delta \eta_1
	+
	O(\varepsilon^2)
	\Bigr) \\
	&=
	\frac{\varepsilon}{4}W_b''(0)\Delta^2\eta_1
	+
	O(\varepsilon^2).
\end{align*}
	From \eqref{eq:Wb_expansion} and \eqref{eq:Wbp_expansion}, we obtain
	\begin{align*}
		H W_b'(H) - W_b(H) 
		&= H \bigl( W_b''(0) H + \tfrac{1}{2} W_b'''(0) H^2 + o(H^2) \bigr)\\[4pt]
		&\quad
		- \bigl( \tfrac{1}{2} W_b''(0) H^2 + \tfrac{1}{6} W_b'''(0) H^3 + o(H^3) \bigr) \\[4pt]
		&= \frac{1}{2} W_b''(0) H^2 + \frac{1}{3} W_b'''(0) H^3 + o(H^3).
	\end{align*}
	Since \(H = O(\varepsilon)\) by \eqref{eq:H_expansion}, we have 
	\(H W_b'(H) - W_b(H) = O(\varepsilon^2)\). With 
	\(\sqrt{a} = 1 + O(\varepsilon^2)\) and \(H = O(\varepsilon)\), then 
	\[2\sqrt{a}H(H W_b'(H) - W_b(H))=
	O(\varepsilon^3).\]
	By \eqref{eq:K_expansion}, \(K = O(\varepsilon^2)\), and by \eqref{eq:Wbp_expanded}, 
	\(W_b'(H) = O(\varepsilon)\). Hence, we have
	\[ -\sqrt{a} K W_b'(H)=O(\varepsilon^3).\]
Combined with  the above, the normal stress density expands as
\begin{align}
	\Psi_n = \varepsilon \frac{W_b''(0)}{4} \Delta^2 \eta_1 + \varepsilon^2 \Psi_{n,2} + o(\varepsilon^2), \label{eq:Psi_n_expanded}
\end{align}
where \(\Psi_{n,2}\) collects all terms at order \(\varepsilon^2\) from the bending energy 
expansion. Its explicit form is not required for the linearized analysis.
The flattened dynamic boundary condition is
\[
\mathscr P-g\eta+\frac1{\sqrt a}\Psi_n=0
\qquad\text{on }~z=0.
\]
Since \(1/\sqrt a=1+O(\varepsilon^2)\), this factor does not affect the
terms at order \(\varepsilon\). Substituting the expansions of
\(\mathscr P\), \(\eta\), and \(\Psi_n\), the terms at order
\(\varepsilon\) give
\[
\mathscr P_1
=
g\eta_1
-
\frac{W_b''(0)}4\Delta^2\eta_1
\qquad\text{on }~z=0. \label{eq:linear_dynamic}
\]

	\section{Analysis of the linearized problem}\label{sect4}
In this section, we first solve the linearized problem mode by mode, from which we derive the hydroelastic dispersion relation. Subsequently, we prove the finiteness of the resonant set, and finally characterize the kernel of the linearized operator.
	\subsection{Reduction to a scalar problem}
Taking the \(z\)-component of the linearized momentum equation
\eqref{eq:linear_momentum}, we obtain
\begin{align}
	U\partial_x w_1+\partial_z\mathscr P_1
	=
	-\,U^2\partial_x^2\varphi_1 .
	\label{eq:third_comp}
\end{align}
Indeed, one has
\[
\nabla\times(\varphi_1\mathbf e_2)
=
(-\partial_z\varphi_1,0,\partial_x\varphi_1),
\]
so that the third component of
\(U^2\partial_x\nabla\times(\varphi_1\mathbf e_2)\) is
\(U^2\partial_x^2\varphi_1\), which appears on the left-hand side of
\eqref{eq:linear_momentum}.
Taking the divergence of \eqref{eq:linear_momentum} and using \eqref{incompress} give
\begin{align}
	2U'\partial_x w_1+\Delta\mathscr P_1
	=
	-\,2UU'\partial_x^2\varphi_1 .
	\label{eq:div_mom}
\end{align}
From \eqref{eq:third_comp}, we have
\begin{align}\label{eq1}
	\partial_x w_1
	=
	-\,U\partial_x^2\varphi_1
	-
	\frac{\partial_z\mathscr P_1}{U}.
\end{align}
Substituting \eqref{eq1} into \eqref{eq:div_mom} yields
\begin{align*}
	2U'\left(
	-\,U\partial_x^2\varphi_1
	-
	\frac{\partial_z\mathscr P_1}{U}
	\right)
	+
	\Delta\mathscr P_1
	=
	-\,2UU'\partial_x^2\varphi_1 .
\end{align*}
The terms involving \(\partial_x^2\varphi_1\) cancel, and hence
\begin{align}
	\Delta \mathscr{P}_1
	-
	2\frac{U'}{U}\partial_z\mathscr{P}_1
	=0
	\quad \text{in } \Omega.
	\label{eq:wp_eq}
\end{align}
	
	\subsection{Fourier representation}
	We now express the linearized problem in Fourier series. Recall the Fourier expansions \eqref{eq:Fourier_p}:
	\begin{align*}
		\mathscr{P}_1(x, y, z) &= \sum_{\mathbf{k} \in \Lambda^*} \mathscr{P}_{1,\mathbf{k}}(z) \textup{e}^{i(\alpha x + \beta y)},
	\end{align*}
	where $\mathbf{k} = (\alpha, \beta) \in \Lambda^*$ and we work with complex exponentials for convenience; the final result for the cosine series follows by linearity.
	Substituting the Fourier expansion of $\mathscr{P}_1$ into \eqref{eq:wp_eq} yields for each mode $\mathbf{k}$
	\begin{align}
		\mathscr{P}_{1,\mathbf{k}}'' - 2\frac{U'}{U} \mathscr{P}_{1,\mathbf{k}}' - |\mathbf{k}|^2 \mathscr{P}_{1,\mathbf{k}} = 0, \quad z \in (-d,0). \label{eq:wp_ode}
	\end{align}
	This can be written in divergence form as
	\begin{align*}
		\left( \frac{\mathscr{P}_{1,\mathbf{k}}'}{U^2} \right)' = |\mathbf{k}|^2 \frac{\mathscr{P}_{1,\mathbf{k}}}{U^2}. 
	\end{align*}
	
From \eqref{eq:linear_bottom}, we have
\[
w_{1,\mathbf k}(-d)=0.
\]
Evaluating \eqref{eq:third_comp} at \(z=-d\) gives
\[
U(-d)i\alpha w_{1,\mathbf k}(-d)
+
\mathscr P_{1,\mathbf k}'(-d)
=
-\,U(-d)^2 \partial_x^2\varphi_{1,\mathbf k}(-d).
\]
Since
\[
\varphi_1=\left(1+\frac zd\right)\eta_1,
\]
we have \(\varphi_{1,\mathbf k}(-d)=0\). Hence
\begin{align*}
	\mathscr P_{1,\mathbf k}'(-d)=0.
\end{align*}

Next, the flattened kinematic condition on the free surface is
\[
w_1=0
\quad \text{on } z=0,
\]
and therefore, for each Fourier mode, we have
\[
w_{1,\mathbf k}(0)=0.
\]
Evaluating \eqref{eq:third_comp} at \(z=0\), we obtain
\[
U(0)i\alpha w_{1,\mathbf k}(0)
+
\mathscr P_{1,\mathbf k}'(0)
=
-\,U(0)^2 \partial_x^2\varphi_{1,\mathbf k}(0).
\]
Since \(w_{1,\mathbf k}(0)=0\) and \(\varphi_{1,\mathbf k}(0)=\eta_{1,\mathbf k}\), which yields
\begin{align*}
	\mathscr P_{1,\mathbf k}'(0)
	=
	\alpha^2 U(0)^2\eta_{1,\mathbf k}.
\end{align*}
	The linearized dynamic condition \eqref{eq:linear_dynamic} reads
	\begin{align*}
		\mathscr{P}_1 = g \eta_1 - \frac{W_b''(0)}{4} \Delta^2 \eta_1 \quad \text{on } z = 0.
	\end{align*}
	For a Fourier mode $\ep^{i(\alpha x + \beta y)}$, we have $\Delta^2 \eta_1 = |\mathbf{k}|^4 \eta_1$ with $|\mathbf{k}|^2 = \alpha^2 + \beta^2$. Hence, we obtain
	\begin{align*}
		\mathscr{P}_{1,\mathbf{k}}(0) = \left( g - \frac{W_b''(0)}{4} |\mathbf{k}|^4 \right) \eta_{1,\mathbf{k}}.
	\end{align*}
	Together with the dynamic boundary condition, the boundary conditions for
	\(\mathscr P_{1,\mathbf k}\) are
	\begin{align*}
		\mathscr P_{1,\mathbf k}'(-d)&=0,\\
		\mathscr P_{1,\mathbf k}'(0)&=\alpha^2U(0)^2\eta_{1,\mathbf k},\\
		\mathscr P_{1,\mathbf k}(0)
		&=
		\left(
		g-\frac{W_b''(0)}4|\mathbf k|^4
		\right)\eta_{1,\mathbf k}.
	\end{align*}
For each \(\mathbf k\neq \mathbf 0\), set
\[
\kappa:=|\mathbf k|>0.
\]
Since the pressure mode \(\mathscr P_{1,\mathbf k}\) satisfies the same
vertical equation and the same bottom condition up to a multiplicative
constant, it is useful to introduce the normalized profile \(P_{\mathbf k}\)
defined by
\begin{align}
	P_{\mathbf k}''
	-
	2\frac{U'}{U}P_{\mathbf k}'
	-
	\kappa^2P_{\mathbf k}
	&=0,
	\qquad -d<z<0,
	\label{eq:Pk_ode}\\
	P_{\mathbf k}(-d)&=1,
	\qquad
	P_{\mathbf k}'(-d)=0.
	\label{eq:Pk_initial}
\end{align}
Since \(U(z)\neq0\) on \([-d,0]\), equation \eqref{eq:Pk_ode} can be written
in divergence form as
\begin{align}
	\left(\frac{P_{\mathbf k}'}{U^2}\right)'
	=
	\kappa^2\frac{P_{\mathbf k}}{U^2}.
	\label{eq:Pk_divergence}
\end{align}

\begin{lemma}\label{lem:riccati}
	For each \(\mathbf k\neq\mathbf 0\), the solution \(P_{\mathbf k}\) of
	\eqref{eq:Pk_ode}--\eqref{eq:Pk_initial} satisfies
	\[
	P_{\mathbf k}(z)>0
	\quad\text{for all }z\in[-d,0],
	\]
	and
	\[
	P_{\mathbf k}'(z)>0
	\quad\text{for all }z\in(-d,0],
	\]
	then
	\[
	q_{\mathbf k}(z):=\frac{P_{\mathbf k}'(z)}{P_{\mathbf k}(z)}
	\]
	is well-defined on \([-d,0]\), belongs to \(C^1([-d,0])\), and satisfies
	\begin{align}
		q_{\mathbf k}'
		=
		2\frac{U'}{U}q_{\mathbf k}+
		\kappa^2-q_{\mathbf k}^2,
		\qquad
		q_{\mathbf k}(-d)=0.
		\label{eq:riccati}
	\end{align}
	Moreover, we have
	\begin{align*}
		q_{\mathbf k}(0)>0,
		\qquad
		\lim_{\kappa\to\infty}
		\frac{q_{\mathbf k}(0)}{\kappa}
		=1.
	\end{align*}
\end{lemma}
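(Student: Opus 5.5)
Positivity follows from the divergence form \eqref{eq:Pk_divergence}, the Riccati equation from direct differentiation, and the limit from a comparison argument that is the only delicate step.

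\emph{Positivity.} Integrate \eqref{eq:Pk_divergence} from \(-d\) to \(z\). Using \(P_{\mathbf k}'(-d)=0\),
\[
\frac{P_{\mathbf k}'(z)}{U(z)^2}=\kappa^2\int_{-d}^{z}\frac{P_{\mathbf k}(s)}{U(s)^2}\,ds .
\]
Let \(z_*=\sup\{z\in[-d,0]: P_{\mathbf k}>0 \text{ on } [-d,z]\}\). Since \(P_{\mathbf k}(-d)=1\) and \(P_{\mathbf k}\) is continuous, \(z_*>-d\). On \((-d,z_*)\) the integrand is positive, so \(P_{\mathbf k}'>0\) there. Hence \(P_{\mathbf k}\) is increasing on \([-d,z_*]\), so \(P_{\mathbf k}(z_*)\ge1\). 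If \(z_*<0\), continuity would extend positivity beyond \(z_*\), a contradiction; therefore \(z_*=0\). This gives \(P_{\mathbf k}\ge1>0\) on \([-d,0]\) and \(P_{\mathbf k}'>0\) on \((-d,0]\). Because \(U\in C^2\) does not vanish, the coefficient \(U'/U\) is \(C^1\). Then \(P_{\mathbf k}\in C^2\), so \(q_{\mathbf k}=P_{\mathbf k}'/P_{\mathbf k}\in C^1\).

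\emph{Riccati equation.} Differentiate to get \(q_{\mathbf k}'=P_{\mathbf k}''/P_{\mathbf k}-q_{\mathbf k}^2\). Substituting \(P_{\mathbf k}''=2\frac{U'}{U}P_{\mathbf k}'+\kappa^2P_{\mathbf k}\) from \eqref{eq:Pk_ode} yields \eqref{eq:riccati}. The initial value \(q_{\mathbf k}(-d)=0\) comes from \eqref{eq:Pk_initial}, and \(q_{\mathbf k}(0)>0\) because \(P_{\mathbf k}'(0)>0\) and \(P_{\mathbf k}(0)>0\).

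\emph{Asymptotics.} Set \(M=\max_{[-d,0]}|2U'/U|\) and \(m=\min_{[-d,0]}2U'/U\); both are finite. For the upper bound, use \(q_{\mathbf k}\ge0\) (it is \(0\) at \(-d\) and positive afterwards) to get \(q'\le Mq+\kappa^2-q^2\). The constant \(\bar q=\tfrac{M}{2}+\sqrt{M^2/4+\kappa^2}\) makes the right-hand side vanish. Since \(q(-d)=0<\bar q\), the scalar comparison principle for first-order ODEs gives \(q_{\mathbf k}\le\bar q=\kappa+O(1)\).

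For the lower bound, \(q'\ge mq+\kappa^2-q^2\). Let \(\underline q\) solve \(\underline q'=m\underline q+\kappa^2-\underline q^2\) with \(\underline q(-d)=0\). This autonomous Riccati equation has stable equilibrium \(r_+=\tfrac m2+\sqrt{m^2/4+\kappa^2}\) and unstable equilibrium \(r_-<0\). Its explicit solution approaches \(r_+\) at the exponential rate \(r_+-r_-\approx2\kappa\). Consequently \(\underline q(0)=r_+(1-O(e^{-2\kappa d}))\), and comparison gives \(q_{\mathbf k}(0)\ge\underline q(0)\).

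Dividing both bounds by \(\kappa\) and letting \(\kappa\to\infty\) gives \(q_{\mathbf k}(0)/\kappa\to1\). The main obstacle is this lower bound: the fixed depth \(d\) must be long enough for \(\underline q\) to reach \(r_+\). This holds because the relaxation time is of order \(1/\kappa\). To make this quantitative, I would write \(\underline q\) explicitly as a \(\tanh\)-type function of \((z+d)\), namely \(\tfrac m2+s\tanh(s(z+d)+c)\) with \(s=\sqrt{m^2/4+\kappa^2}\) and \(c\) chosen so that \(\underline q(-d)=0\).
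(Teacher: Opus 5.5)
Your proposal is correct and follows the paper's route: positivity from the divergence form together with $P_{\mathbf k}'(-d)=0$, the Riccati equation by direct differentiation, and the limit by scalar comparison with autonomous Riccati equations solved by $\tanh$-type formulas. The only differences are that you use the constant equilibrium $\tfrac M2+\sqrt{M^2/4+\kappa^2}$ as the upper barrier instead of the paper's explicit $\ell_+$, and $m=\min 2U'/U$ in place of $-2A$ for the lower barrier; these save a little computation but change nothing essential.
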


\begin{proof}
	The existence and uniqueness of \(P_{\mathbf k}\in C^2([-d,0])\) follow
	from the standard theory of linear ordinary differential equations, since
	\(U'/U\in C^1([-d,0])\).
	
	We first prove $P_{\mathbf k}(z)>0$ on $[-d,0]$. Assume that
	\(P_{\mathbf k}\) has a zero in \((-d,0]\). Let \(z_0\) be its first zero, 
	then
	\[
	P_{\mathbf k}(z)>0
	\quad\text{for } -d\le z<z_0,
	\qquad
	P_{\mathbf k}(z_0)=0.
	\]
	On \((-d,z_0)\),  \eqref{eq:Pk_divergence} gives
	\[
	\left(\frac{P_{\mathbf k}'}{U^2}\right)'
	=
	\kappa^2\frac{P_{\mathbf k}}{U^2}
	>0.
	\]
	Since \(P_{\mathbf k}'(-d)=0\), it follows that
	\[
	\frac{P_{\mathbf k}'(z)}{U(z)^2}>0
	\quad\text{for } -d<z<z_0.
	\]
	Hence \(P_{\mathbf k}'(z)>0\) on \((-d,z_0)\). Therefore, one has
	\[
	P_{\mathbf k}(z_0)>P_{\mathbf k}(-d)=1,
	\]
	which contradicts \(P_{\mathbf k}(z_0)=0\). Thus, it holds that
	\[
	P_{\mathbf k}(z)>0
	\quad\text{on }~[-d,0].
	\]
	
	Returning to \eqref{eq:Pk_divergence}, we obtain
	\[
	\left(\frac{P_{\mathbf k}'}{U^2}\right)'
	=
	\kappa^2\frac{P_{\mathbf k}}{U^2}
	>0
	\quad\text{on }~[-d,0].
	\]
	Together with \(P_{\mathbf k}'(-d)=0\), this yields
	\[
	P_{\mathbf k}'(z)>0
	\quad\text{for } -d<z\le0.
	\]
	In particular, \(q_{\mathbf k}\) is globally well-defined on \([-d,0]\),
	and
	\[
	q_{\mathbf k}(0)>0.
	\]
	
	Next, differentiating
	\[
	q_{\mathbf k}=\frac{P_{\mathbf k}'}{P_{\mathbf k}}
	\]
	and combining with \eqref{eq:Pk_ode}, we obtain
	\[
	q_{\mathbf k}'
	=
	\frac{P_{\mathbf k}''}{P_{\mathbf k}}
	-
	\left(
	\frac{P_{\mathbf k}'}{P_{\mathbf k}}
	\right)^2
	=
	2\frac{U'}{U}q_{\mathbf k}
	+
	\kappa^2
	-
	q_{\mathbf k}^2
	\]
	and
	\[
	q_{\mathbf k}(-d)
	=
	\frac{P_{\mathbf k}'(-d)}{P_{\mathbf k}(-d)}
	=
	0,
	\]
	which proves \eqref{eq:riccati}.
	
	It remains to prove the asymptotic formula. Set
	\[
	a(z):=\frac{U'(z)}{U(z)},
	\qquad
	A:=\|a\|_{L^\infty(-d,0)}.
	\]
	Since \(q_{\mathbf k}\ge0\), equation \eqref{eq:riccati} implies
	\[
	\kappa^2-q_{\mathbf k}^2-2Aq_{\mathbf k}
	\le
	q_{\mathbf k}'
	\le
	\kappa^2-q_{\mathbf k}^2+2Aq_{\mathbf k}.
	\]
	Let \(\ell_-\) and \(\ell_+\) solve
	\begin{align*}
		\ell_-'
		&=
		\kappa^2-\ell_-^2-2A\ell_-,
		\qquad
		\ell_-(-d)=0,\\
		\ell_+'
		&=
		\kappa^2-\ell_+^2+2A\ell_+,
		\qquad
		\ell_+(-d)=0.
	\end{align*}
	By the scalar comparison principle for ordinary differential equations,
	\[
	\ell_-(z)
	\le
	q_{\mathbf k}(z)
	\le
	\ell_+(z),
	\qquad -d\le z\le0.
	\]
	We compute the two comparison functions explicitly. For \(c\in\{-A,A\}\),
	let \(\ell_c\) solve
	\[
	\ell_c'
	=
	\kappa^2-\ell_c^2+2c\ell_c,
	\qquad
	\ell_c(-d)=0, 
	\]
we obtain	
	\[
	\ell_c(z)
	=
	c
	+
	\mu_c
	\frac{
		R_c \textup{e}^{2\mu_c(z+d)}-1
	}{
		R_c \textup{e}^{2\mu_c(z+d)}+1
	},
	\]	
where
\[
R_c:=\frac{\mu_c-c}{\mu_c+c},~~~~	\mu_c:=\sqrt{\kappa^2+c^2}.
	\]	
	Hence, at \(z=0\), one has
	\[
	\ell_c(0)
	=
	c
	+
	\mu_c
	\frac{
		R_c \textup{e}^{2\mu_c d}-1
	}{
		R_c \textup{e}^{2\mu_c d}+1
	}.
	\]
	When \(\kappa\to\infty\), we have
	\[
	\mu_c=\kappa+O(\kappa^{-1}),
	\qquad
	R_c=1+O(\kappa^{-1}),
	\qquad
	\textup{e}^{-2\mu_c d}=o(1).
	\]
	Therefore
	\[
	\ell_c(0)=\kappa+O(1),
	\qquad
	\frac{\ell_c(0)}{\kappa}\to1.
	\]
	Using
	\[
	\ell_-(0)
	\le
	q_{\mathbf k}(0)
	\le
	\ell_+(0),
	\]
	we conclude that
	\[
	\lim_{\kappa\to\infty}
	\frac{q_{\mathbf k}(0)}{\kappa}
	=1.
	\]
	The proof is completed.

\end{proof}

\subsection{Dispersion relation}

Let \(\mathbf k\neq\mathbf0\) be a nontrivial Fourier mode. Since the
solution space of \eqref{eq:wp_ode} satisfying
\[
\mathscr P_{1,\mathbf k}'(-d)=0
\]
is one-dimensional, every nontrivial solution is a scalar multiple of the
normalized solution \(P_{\mathbf k}\) constructed in Lemma
\ref{lem:riccati}. Therefore, we have
\[
\frac{\mathscr P_{1,\mathbf k}'(0)}
{\mathscr P_{1,\mathbf k}(0)}
=
\frac{P_{\mathbf k}'(0)}{P_{\mathbf k}(0)}
=
q_{\mathbf k}(0),
\]
provided \(\mathscr P_{1,\mathbf k}\not\equiv0\).

Using the boundary conditions
\[
\mathscr P_{1,\mathbf k}'(0)
=
\alpha^2U(0)^2\eta_{1,\mathbf k},
\]
and
\[
\mathscr P_{1,\mathbf k}(0)
=
\left(
g-\frac{W_b''(0)}4|\mathbf k|^4
\right)\eta_{1,\mathbf k},
\]
we obtain the dispersion relation
\begin{align}
	q_{\mathbf k}(0)
	=
	\frac{\alpha^2U(0)^2}
	{
		g-\frac{W_b''(0)}4|\mathbf k|^4
	}.
	\label{eq:dispersion}
\end{align}
Notice that Lemma \ref{lem:riccati} implies
\[
q_{\mathbf k}(0)>0
\quad\text{for every } \mathbf k\neq\mathbf0.
\]

\begin{proposition}\label{prop:finite_modes}
	The set of resonant wave vectors
	\[
	\Sigma(U)
	:=
	\{
		\mathbf k\in\Lambda^*\setminus\{\mathbf0\}:
		\mathbf k \text{ satisfies } \eqref{eq:dispersion}
		\}
	\]
	is finite.
\end{proposition}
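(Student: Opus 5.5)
The plan is to use the positivity of $q_{\mathbf k}(0)$ from Lemma~\ref{lem:riccati} together with the structure of the right-hand side of \eqref{eq:dispersion}. This gives an algebraic bound on $|\mathbf k|$, and a bounded subset of a lattice is finite.

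First, take $\mathbf k=(\alpha,\beta)\in\Sigma(U)$. For \eqref{eq:dispersion} to make sense, the denominator $g-\frac{W_b''(0)}4|\mathbf k|^4$ must be nonzero. If it vanished, the boundary conditions $\mathscr P_{1,\mathbf k}(0)=0$ and $\mathscr P'_{1,\mathbf k}(0)=\alpha^2U(0)^2\eta_{1,\mathbf k}$, combined with $P_{\mathbf k}(0)>0$, would force $\mathscr P_{1,\mathbf k}\equiv0$. That in turn gives $\alpha^2U(0)^2\eta_{1,\mathbf k}=0$, so such a mode is not resonant. I will treat this case by noting that it is excluded from the defining relation.

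Next, suppose $\alpha=0$. Then the numerator vanishes, so \eqref{eq:dispersion} would give $q_{\mathbf k}(0)=0$. This contradicts $q_{\mathbf k}(0)>0$ from Lemma~\ref{lem:riccati}, since $\mathbf k\neq\mathbf 0$. Hence $\alpha\neq0$, and because $U(0)\neq0$ the numerator $\alpha^2U(0)^2$ is strictly positive. Since $q_{\mathbf k}(0)>0$, the denominator must then be positive as well:
\[
g-\frac{W_b''(0)}4|\mathbf k|^4>0,\qquad\text{i.e.}\qquad |\mathbf k|^4<\frac{4g}{W_b''(0)},
\]
where we use $W_b''(0)>0$. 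So every resonant $\mathbf k$ lies in the open ball of radius $(4g/W_b''(0))^{1/4}$.

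Finally, $\Lambda^*=\kappa_1\mathbb Z\times\kappa_2\mathbb Z$ is a discrete lattice. Its intersection with a bounded ball is finite, with at most $(2R/\kappa_1+1)(2R/\kappa_2+1)$ points, where $R$ is the radius above. Therefore $\Sigma(U)$ is finite.

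There is no real obstacle here. The only care needed is the sign argument, which requires ruling out $\alpha=0$ and a vanishing denominator so that dividing by a positive numerator is legitimate. Unlike the capillary case, the asymptotic formula $q_{\mathbf k}(0)/\kappa\to1$ is not needed.
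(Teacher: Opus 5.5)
Your proof is correct and follows essentially the same route as the paper: positivity of $q_{\mathbf k}(0)$ from Lemma~\ref{lem:riccati} rules out $\alpha=0$ and forces $g-\frac{W_b''(0)}4|\mathbf k|^4>0$, so $|\mathbf k|<(4g/W_b''(0))^{1/4}$, and discreteness of $\Lambda^*$ gives finiteness. Your extra remarks, namely excluding a vanishing denominator and stating explicitly that you use $W_b''(0)>0$, are sound refinements of points the paper leaves implicit.
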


\begin{proof}
	By Lemma \ref{lem:riccati}, we have
	\[
	q_{\mathbf k}(0)>0
	\quad
	\text{for every } \mathbf k\neq\mathbf0.
	\]
	If \(\mathbf k\in\Sigma(U)\), then the dispersion relation
	\eqref{eq:dispersion} holds. Hence the right-hand side of
	\eqref{eq:dispersion} must be positive:
	\[
	\frac{\alpha^2U(0)^2}
	{
		g-\frac{W_b''(0)}4|\mathbf k|^4
	}
	>0.
	\]
	Since the flow is non-stagnant, \(U(0)\neq0\). Moreover, the case
	\(\alpha=0\) is impossible, because then the right-hand side of
	\eqref{eq:dispersion} would be zero, contradicting
	\(q_{\mathbf k}(0)>0\). Therefore \(\alpha^2U(0)^2>0\), and we have
	\[
	g-\frac{W_b''(0)}4|\mathbf k|^4>0.
	\]
	which implies
	\[
	|\mathbf k|^4
	<
	\frac{4g}{W_b''(0)}.
	\]
	Consequently, every \(\mathbf k\in\Sigma(U)\) lies in the bounded lattice
	set
	\[
	\left\{\mathbf k\in\Lambda^*:
		|\mathbf k|
		<
		\left(\frac{4g}{W_b''(0)}\right)^{1/4}
		\right\}.
	\]
	Since \(\Lambda^*\) is discrete, this bounded set contains only finitely
	many lattice points. Hence \(\Sigma(U)\) is finite.
\end{proof}

\subsection{Kernel of the linearization}

For \(\mathbf k\in\Sigma(U)\), the pressure mode is written as
\begin{align*}
	\mathscr P_{1,\mathbf k}(z)
	=
	a_{\mathbf k}Q_{\mathbf k}(z),
\end{align*}
where
\begin{align*}
	Q_{\mathbf k}(z)
	=
	\frac{\alpha^2U(0)^2}{q_{\mathbf k}(0)}
	\exp\left(
	\int_0^z q_{\mathbf k}(t)\,dt
	\right).
\end{align*}
Here \(\mathbf k=(\alpha,\beta)\). By construction,
\[
Q_{\mathbf k}'(z)
=
Q_{\mathbf k}(z)q_{\mathbf k}(z),
\]
and
\[
Q_{\mathbf k}(0)
=
\frac{\alpha^2U(0)^2}{q_{\mathbf k}(0)}.
\]
Using the dispersion relation \eqref{eq:dispersion}, we also have
\[
Q_{\mathbf k}(0)
=
g-\frac{W_b''(0)}4|\mathbf k|^4.
\]

For later use, we define
\begin{align}
	\Theta(z)
	&:=
	1+\frac{z}{d},\\
	S_{\mathbf k}(z)
	&:=
	Q_{\mathbf k}(z)q_{\mathbf k}(z)
	-
	\alpha^2U(z)^2\Theta(z),
	\label{eq:Sk_def}
\end{align}
then
\[
S_{\mathbf k}(-d)=0,
\qquad
S_{\mathbf k}(0)=0.
\]
Indeed, \(q_{\mathbf k}(-d)=0\) and \(\Theta(-d)=0\), while
\[
S_{\mathbf k}(0)
=
Q_{\mathbf k}(0)q_{\mathbf k}(0)
-
\alpha^2U(0)^2
=
0.
\]

\begin{theorem}\label{kernel}
	The kernel of the linearized problem consists of functions of the form
	\begin{align}
		\eta_1(x,y)
		&=
		a_0
		+
		\sum_{\mathbf k\in\Sigma(U)}
		a_{\mathbf k}\textup{e}^{i\mathbf k\cdot(x,y)},
		\label{eq:kernel_eta}\\
		\mathscr P_1(x,y,z)
		&=
		ga_0
		+
		\sum_{\mathbf k\in\Sigma(U)}
		a_{\mathbf k}Q_{\mathbf k}(z)\textup{e}^{i\mathbf k\cdot(x,y)},\\
		\mathbf u_1(x,y,z)
		&=
		h_0(y,z)\mathbf e_1
		+
		\sum_{\mathbf k\in\Sigma(U)}
		a_{\mathbf k}
		\begin{pmatrix}
			X_{\mathbf k}(z)\\
			V_{\mathbf k}(z)\\
			W_{\mathbf k}(z)
		\end{pmatrix}
		\textup{e}^{i\mathbf k\cdot(x,y)},
		\label{eq:kernel_u}
	\end{align}
	where 
	\begin{align*}
		V_{\mathbf k}(z)
		&=
		-\frac{\beta Q_{\mathbf k}(z)}{\alpha U(z)},\\
		W_{\mathbf k}(z)
		&=
		\frac{i}{\alpha U(z)}
		\left[
		Q_{\mathbf k}(z)q_{\mathbf k}(z)
		-
		\alpha^2U(z)^2\Theta(z)
		\right]
		=
		\frac{iS_{\mathbf k}(z)}{\alpha U(z)},\\
		X_{\mathbf k}(z)
		&=
		-\frac{Q_{\mathbf k}(z)}{U(z)}
		-
		\frac{U'(z)}{\alpha^2U(z)^2}
		\left[
		Q_{\mathbf k}(z)q_{\mathbf k}(z)
		-
		\alpha^2U(z)^2\Theta(z)
		\right]
		+
		\frac{U(z)}{d}
	\end{align*}
	for \(\mathbf k=(\alpha,\beta)\in\Sigma(U)\). The coefficients \(a_0\) and \(a_{\mathbf k}\) are real and satisfy the
	\((+)\)-symmetry condition
	\[
	a_{-\alpha,\beta}
	=
	a_{\alpha,-\beta}
	=
	a_{\alpha,\beta},
	\]
	and \(h_0\) is an arbitrary \(\lambda_2\)-periodic even function of \(y\).
\end{theorem}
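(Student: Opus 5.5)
We solve the linearized system \eqref{eq:linear_momentum}--\eqref{eq:linear_dynamic} mode by mode in the Fourier basis \(\mathrm e^{i\mathbf k\cdot(x,y)}\). The scalar reduction \eqref{eq:wp_eq} decouples the pressure, so we first determine \(\mathscr P_1\) and \(\eta_1\), and then recover \(\mathbf u_1\) from the components of \eqref{eq:linear_momentum}.

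\textbf{Modes with \(\mathbf k\neq\mathbf 0\).} Each \(\mathscr P_{1,\mathbf k}\) solves \eqref{eq:wp_ode} with \(\mathscr P_{1,\mathbf k}'(-d)=0\), so \(\mathscr P_{1,\mathbf k}=c\,P_{\mathbf k}\). Suppose \(\eta_{1,\mathbf k}=0\). The two surface conditions then force \(c P_{\mathbf k}(0)=0\), and hence \(c=0\) by Lemma~\ref{lem:riccati}. Suppose instead \(\eta_{1,\mathbf k}\neq0\). Dividing the surface conditions gives \eqref{eq:dispersion}, so \(\mathbf k\in\Sigma(U)\) and \(\alpha\neq0\) (see the proof of Proposition~\ref{prop:finite_modes}). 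Set \(a_{\mathbf k}=\eta_{1,\mathbf k}\). Then \(\mathscr P_{1,\mathbf k}=a_{\mathbf k}Q_{\mathbf k}\), because \(Q_{\mathbf k}\) is the multiple of \(P_{\mathbf k}\) normalized so that \(Q_{\mathbf k}'(0)=\alpha^2U(0)^2\).

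With \(\mathscr P_{1,\mathbf k}\) known, the velocity follows component by component. The \(z\)-equation \eqref{eq:third_comp} gives \(i\alpha U w_{1,\mathbf k}=-\mathscr P_{1,\mathbf k}'+\alpha^2U^2\Theta a_{\mathbf k}=-a_{\mathbf k}S_{\mathbf k}\), which is \(W_{\mathbf k}\). The \(y\)-component of \eqref{eq:linear_momentum} has no \(\varphi_1\) term, so \(i\alpha U v_{1,\mathbf k}+i\beta\mathscr P_{1,\mathbf k}=0\), which gives \(V_{\mathbf k}\). For the \(x\)-component, the first entry of \(U^2\partial_x\nabla\times(\varphi_1\mathbf e_2)\) is \(-U^2\partial_x\partial_z\varphi_1=-i\alpha U^2a_{\mathbf k}/d\). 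Hence
\[
i\alpha U u_{1,\mathbf k}+U'w_{1,\mathbf k}-\frac{i\alpha U^2}{d}a_{\mathbf k}+i\alpha\mathscr P_{1,\mathbf k}=0,
\]
and solving yields \(X_{\mathbf k}\). We must also check that incompressibility \eqref{incompress} holds. It reduces to \(i\alpha X+i\beta V+W'=0\). Substituting \(S_{\mathbf k}'=Q_{\mathbf k}(q_{\mathbf k}'+q_{\mathbf k}^2)-2\alpha^2UU'\Theta-\alpha^2U^2/d\) and the Riccati equation \eqref{eq:riccati}, the terms cancel identically. The boundary conditions \(w_1=0\) at \(z=-d,0\) follow from \(S_{\mathbf k}(-d)=S_{\mathbf k}(0)=0\).

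\textbf{The zero mode and \(\alpha=0\) modes.} For \(\mathbf k=\mathbf 0\), \eqref{eq:wp_ode} with \(\mathscr P'(-d)=0\) forces a constant pressure, and the dynamic condition gives \(\mathscr P_{1,\mathbf 0}=ga_0\). For modes with \(\alpha=0\), \(\beta\neq0\), the argument above gives \(\mathscr P_{1,\mathbf k}=0\) and \(\eta_{1,\mathbf k}=0\) (since \(\alpha=0\notin\Sigma\)). The \(z\)- and \(y\)-equations then impose no constraint through \(U\partial_x\), so we argue differently. The \(x\)-equation gives \(U'w_{1,\mathbf k}=0\), and incompressibility gives \(i\beta v_{1,\mathbf k}+w_{1,\mathbf k}'=0\). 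For these \(x\)-independent modes the remaining constraint must be extracted carefully. Our plan is to use the \((+)\)-symmetry: the \(v\)-component of \(\mathbf u\) is proportional to \(\sin(\alpha x)\), so it vanishes when \(\alpha=0\). This gives \(v_{1,\mathbf k}=0\), hence \(w_{1,\mathbf k}'=0\), and with \(w=0\) at the bottom, \(w_{1,\mathbf k}=0\). The same symmetry kills the \(w\)-component at \(\alpha=0\). Only the \(u\)-component survives, and it is unconstrained; these modes assemble into \(h_0(y,z)\mathbf e_1\), which is even in \(y\) by symmetry. Reality and the \((+)\)-symmetry of \(\eta_1\) give the stated relations among the \(a_{\mathbf k}\).

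Conversely, every function of the stated form solves the linearized problem, by the same computations read backwards. The main obstacle is the incompressibility check, which is a messy cancellation relying on \eqref{eq:riccati}. A secondary delicate point is the treatment of the \(\alpha=0\) modes, where we rely on symmetry rather than on the equations.
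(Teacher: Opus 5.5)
Your argument follows the same route as the paper: mode-by-mode Fourier analysis, $\mathscr P_{1,\mathbf k}=cP_{\mathbf k}$, the two surface relations yielding \eqref{eq:dispersion}, and then the $y$-, $z$- and $x$-components of \eqref{eq:linear_momentum} giving $V_{\mathbf k}$, $W_{\mathbf k}$, $X_{\mathbf k}$. Your formulas agree with the paper's. You are more careful than the paper on two points it leaves implicit. The first is the case $\eta_{1,\mathbf k}=0$. The second is the $\alpha=0$ velocity modes, where your appeal to the odd parity of $v,w$ in $x$ is genuinely needed: for $\alpha=0$ the momentum equation reduces to $U'w_{1,\mathbf k}=0$, and together with incompressibility this does not force $w_{1,\mathbf k}=v_{1,\mathbf k}=0$ on intervals where $U'\equiv0$. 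The incompressibility check you call the main obstacle is in fact automatic. Take the divergence of \eqref{eq:linear_momentum} without using \eqref{incompress}, then insert \eqref{eq:third_comp} and \eqref{eq:wp_eq}. This gives $U\partial_x(\nabla\cdot\mathbf u_1)=0$, so $(\nabla\cdot\mathbf u_1)_{\mathbf k}=0$ whenever $\alpha\neq0$. Your direct cancellation via \eqref{eq:riccati} is correct, just not needed.

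There is one step that fails in a non-generic case, and the paper's proof passes over it in exactly the same way. Dividing the two surface relations presupposes $\mathscr P_{1,\mathbf k}\not\equiv0$. Take $\mathbf k=(0,\beta)$ with $\beta\neq0$. The kinematic relation gives $cP_{\mathbf k}'(0)=0$, hence $c=0$. The dynamic relation then says only $\bigl(g-\frac{W_b''(0)}{4}\beta^4\bigr)\eta_{1,\mathbf k}=0$. Suppose $\kappa_2\mathbb Z$ contains some $\beta$ with $\beta^4=4g/W_b''(0)$. Then $\eta_1=a\cos(\beta y)$, $\mathscr P_1=0$, $\mathbf u_1=h_0(y,z)\mathbf e_1$ satisfies every linearized equation and lies in the $(+)$-symmetric class: all $x$-derivatives vanish, $w_1=0$, and the dynamic condition holds. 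Yet it is not of the form \eqref{eq:kernel_eta}, because $\Sigma(U)$ contains no modes with $\alpha=0$. So your claim that $\eta_{1,\mathbf k}=0$ for $\alpha=0$ ``since $\alpha=0\notin\Sigma$'' is false in that case. The statement needs a non-resonance assumption such as $\beta^4\neq 4g/W_b''(0)$ for all $\beta\in\kappa_2\mathbb Z$, or else these modes must be added to the kernel. Under that assumption your argument is complete.
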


\begin{proof}
	The representations of \(\eta_1\) and \(\mathscr P_1\) follow directly
	from the Fourier analysis and from the definition of \(Q_{\mathbf k}\).
	By Proposition \ref{prop:finite_modes}, every \(\mathbf k\in\Sigma(U)\)
	satisfies \(\alpha\neq0\), because the case \(\alpha=0\) would make the
	right-hand side of \eqref{eq:dispersion} equal to zero, contradicting
	\(q_{\mathbf k}(0)>0\).
	
	We now derive the velocity coefficients. Fix
	\(\mathbf k=(\alpha,\beta)\in\Sigma(U)\). The linearized momentum equation \eqref{eq:linear_momentum}
	for this Fourier mode can be written as
	\begin{align}
		U'w_{1,\mathbf k}\mathbf e_1
		+
		i\alpha U
		\begin{pmatrix}
			u_{1,\mathbf k}\\
			v_{1,\mathbf k}\\
			w_{1,\mathbf k}
		\end{pmatrix}
		+
		U^2i\alpha
		(i\alpha,i\beta,\partial_z)
		\times
		\left(
		\varphi_{1,\mathbf k}\mathbf e_2
		\right)
		+
		(i\alpha,i\beta,\partial_z)\mathscr P_{1,\mathbf k}
		=
		0,
		\label{eq:linear_momentum_fourier}
	\end{align}
	where
	\[
	\varphi_{1,\mathbf k}(z)
	=
	a_{\mathbf k}\Theta(z).
	\]
	Since
	\[
	(i\alpha,i\beta,\partial_z)
	\times
	\left(
	\varphi_{1,\mathbf k}\mathbf e_2
	\right)
	=
	\left(
	-\varphi_{1,\mathbf k}',
	0,
	i\alpha\varphi_{1,\mathbf k}
	\right),
	\]
	the \(y\)-component of \eqref{eq:linear_momentum_fourier} gives
	\[
	i\alpha Uv_{1,\mathbf k}
	+
	i\beta\mathscr P_{1,\mathbf k}
	=
	0,
	\]
	then 
	\[
	v_{1,\mathbf k}
	=
	-\frac{\beta\mathscr P_{1,\mathbf k}}{\alpha U}
	=
	-a_{\mathbf k}
	\frac{\beta Q_{\mathbf k}}{\alpha U}.
	\]
	
	The \(z\)-component gives
	\[
	i\alpha Uw_{1,\mathbf k}
	-
	\alpha^2U^2\varphi_{1,\mathbf k}
	+
	\mathscr P_{1,\mathbf k}'
	=
	0,
	\]
	then
	\[
	w_{1,\mathbf k}
	=
	\frac{i}{\alpha U}
	\left(
	\mathscr P_{1,\mathbf k}'
	-
	\alpha^2U^2\varphi_{1,\mathbf k}
	\right).
	\]
	Using
	\[
	\mathscr P_{1,\mathbf k}
	=
	a_{\mathbf k}Q_{\mathbf k},
	\qquad
	\mathscr P_{1,\mathbf k}'
	=
	a_{\mathbf k}Q_{\mathbf k}q_{\mathbf k},
	\qquad
	\varphi_{1,\mathbf k}
	=
	a_{\mathbf k}\Theta,
	\]
	we obtain
	\[
	w_{1,\mathbf k}
	=
	a_{\mathbf k}
	\frac{iS_{\mathbf k}}{\alpha U}.
	\]
	
	Finally, the \(x\)-component of \eqref{eq:linear_momentum_fourier} gives
	\[
	U'w_{1,\mathbf k}
	+
	i\alpha Uu_{1,\mathbf k}
	-
	i\alpha U^2\varphi_{1,\mathbf k}'
	+
	i\alpha\mathscr P_{1,\mathbf k}
	=
	0,
	\]
	then
	\[
	u_{1,\mathbf k}
	=
	-\frac{\mathscr P_{1,\mathbf k}}{U}
	-
	\frac{U'}{\alpha^2U^2}
	\left(
	\mathscr P_{1,\mathbf k}'
	-
	\alpha^2U^2\varphi_{1,\mathbf k}
	\right)
	+
	U\varphi_{1,\mathbf k}'.
	\]
	Since
	\[
	\varphi_{1,\mathbf k}'=\frac{a_{\mathbf k}}{d},
	\]
	which gives
	\[
	u_{1,\mathbf k}
	=
	a_{\mathbf k}X_{\mathbf k}.
	\]
	
	The boundary conditions for the vertical component follow from
	\eqref{eq:Sk_def}. Indeed,
	\[
	S_{\mathbf k}(-d)=S_{\mathbf k}(0)=0,
	\]
	and 
	\[
	W_{\mathbf k}(-d)=W_{\mathbf k}(0)=0.
	\]
	The zero mode contributes \(a_0\) to \(\eta_1\), \(ga_0\) to
	\(\mathscr P_1\), and leaves the shear-direction component
	\(h_0(y,z)\mathbf e_1\) free. Summing over all Fourier modes gives
	\eqref{eq:kernel_eta}--\eqref{eq:kernel_u}.
\end{proof}

\section{Solvability at the quadratic level}\label{sect5}

 In this section, we derive the second-order solvability condition by taking
the first component of the curl of the momentum equation and averaging over one period in the \(x\)-direction.

Recall that
\[
\varphi_1(z,x,y)
=
\left(1+\frac{z}{d}\right)\eta_1(x,y)
=
\Theta(z)\eta_1(x,y),
\qquad
\Theta(z):=1+\frac{z}{d}.
\]
We split the first-order velocity into the flattening part and the reduced
velocity by writing
\begin{align*}
	\mathbf u_1
	=
	-\nabla\times\left(U\varphi_1\mathbf e_2\right)
	+
	\mathbf V_1,
\end{align*}
where
\[
\mathbf V_1=(V_1,V_2,V_3).
\]
With this notation,  Eqs. \eqref{eq:linear_momentum}-\eqref{incompress} become
\begin{align}
	V_3U'\mathbf e_1
	+
	U\partial_x\mathbf V_1
	+
	\nabla\mathscr P_1
	=
	0,
	\label{eq:V1_linear_momentum}
\end{align}
and
\begin{align*}
	\nabla\cdot\mathbf V_1=0.
\end{align*}

At order \(\varepsilon^2\), the momentum equation \eqref{eq:flattened_momentum} obtained from
\[
\mathcal M^\top(\mathbf u\cdot\nabla)(\mathcal M\mathbf u)
+
\nabla\mathscr P
=
0
\]
has the form
\begin{align}
	&
	(\mathbf u_0\cdot\nabla)
	\left(
	\mathbf u_2+\mathcal M_1\mathbf u_1+\mathcal M_2\mathbf u_0
	\right)
	+
	(\mathbf u_1\cdot\nabla)
	\left(
	\mathbf u_1+\mathcal M_1\mathbf u_0
	\right)
	+
	(\mathbf u_2\cdot\nabla)\mathbf u_0
	\nonumber\\
	&\quad
	+
	\mathcal M_1^\top
	\left[
	(\mathbf u_0\cdot\nabla)
	\left(
	\mathbf u_1+\mathcal M_1\mathbf u_0
	\right)
	+
	(\mathbf u_1\cdot\nabla)\mathbf u_0
	\right]
	+
	\nabla\mathscr P_2
	=
	0.
	\label{eq:second_order_momentum}
\end{align}
Here \(\mathbf u_0=U(z)\mathbf e_1\), and \(\mathcal M_1,\mathcal M_2\) are
the first and second variations of the matrix \(\mathcal M\). The exact
form of \(\mathcal M_2\) will not be needed.

\begin{lemma}\label{lem:quadratic_cancellation}
Let
\[
	\mathbf u_1
	=
	-\nabla\times\left(U\varphi_1\mathbf e_2\right)
	+
	\mathbf V_1,
	\qquad
	\mathbf V_1=(V_1,V_2,V_3),
\]
then the order-\(\varepsilon^2\) equation \eqref{eq:second_order_momentum} implies the necessary condition
\begin{align}
	\int_0^{\lambda_1}
	\left(
	\nabla\times
	\left[
	(\mathbf V_1\cdot\nabla)\mathbf V_1
	\right]
	\right)_1
	\,dx
	=
	0,
	\label{eq:curl_condition_V}
\end{align}
or
\begin{align}
	\int_0^{\lambda_1}
	\left[
	(\partial_y^2-\partial_z^2)(V_2V_3)
	+
	\partial_y\partial_z
	\left(
	V_3^2-V_2^2
	\right)
	\right]\,dx=0.
	\label{eq:second_order_condition}
\end{align}
\end{lemma}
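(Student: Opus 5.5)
The plan is to take the curl of the order-\(\varepsilon^2\) momentum equation \eqref{eq:second_order_momentum}, keep only its first component, and average over one \(x\)-period. The pressure term \(\nabla\mathscr P_2\) disappears under the curl. Any term of the form \(\partial_x(\cdots)\) disappears after the average, by \(\lambda_1\)-periodicity. Among these are all terms of the form \((\mathbf u_0\cdot\nabla)F=U\partial_x F\), including those containing the unknowns \(\mathbf u_2\) and \(\mathcal M_2\). For \((\mathbf u_2\cdot\nabla)\mathbf u_0=w_2U'\mathbf e_1\) the first curl component is \(\partial_y(\cdot)e_3-\partial_z(\cdot)e_2\) applied to \(w_2U'\mathbf e_1\). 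Since this vector has only an \(\mathbf e_1\)-component, that curl component vanishes identically. The same holds for \(\mathcal M_1^\top\) applied to terms proportional to \(\mathbf e_1\)? No: \(\mathcal M_1^\top\) mixes components, so those terms need an explicit check.

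First I would rewrite the bracket \((\mathbf u_0\cdot\nabla)(\mathbf u_1+\mathcal M_1\mathbf u_0)+(\mathbf u_1\cdot\nabla)\mathbf u_0\) using the first-order equation \eqref{eq:linear_momentum}. It equals \(-\nabla\mathscr P_1\) up to a term of the form \(U\partial_x(\mathcal M_1\mathbf u_0)-U^2\partial_x\nabla\times(\varphi_1\mathbf e_2)\). Since \(\mathcal M_1=\mathbf e_3\otimes\nabla\varphi_1-\partial_z\varphi_1 I\), we have \(\mathcal M_1\mathbf u_0=U(\partial_x\varphi_1\mathbf e_3-\partial_z\varphi_1\mathbf e_1)=U\nabla\times(\varphi_1\mathbf e_2)\). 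So the bracket is exactly \(-\nabla\mathscr P_1\). The \(\mathcal M_1^\top\) term is then \(-\mathcal M_1^\top\nabla\mathscr P_1=\partial_z\varphi_1\nabla\mathscr P_1-\partial_z\mathscr P_1\nabla\varphi_1\). Its curl is \(\nabla(\partial_z\varphi_1)\times\nabla\mathscr P_1-\nabla(\partial_z\mathscr P_1)\times\nabla\varphi_1\). With \(\varphi_1=\Theta\eta_1\), \(\partial_z\varphi_1=\eta_1/d\), and \(\mathscr P_1\) given by Theorem \ref{kernel}, I would show that the \(x\)-average of its first component is an \(x\)-derivative or cancels against the remaining terms.

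Next comes the genuinely quadratic term \((\mathbf u_1\cdot\nabla)(\mathbf u_1+\mathcal M_1\mathbf u_0)\). Writing \(\mathbf u_1=-\nabla\times(U\varphi_1\mathbf e_2)+\mathbf V_1\) and \(\mathcal M_1\mathbf u_0=U\nabla\times(\varphi_1\mathbf e_2)\), the sum \(\mathbf u_1+\mathcal M_1\mathbf u_0\) equals \(\mathbf V_1+\varphi_1U'\mathbf e_3\)? Indeed \(\nabla\times(U\varphi_1\mathbf e_2)=U\nabla\times(\varphi_1\mathbf e_2)+\nabla U\times\varphi_1\mathbf e_2\), and \(\nabla U\times\mathbf e_2=-U'\mathbf e_1\). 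Up to this explicit correction, the sum is \(\mathbf V_1\). The advecting field \(\mathbf u_1\) likewise equals \(\mathbf V_1\) minus a flattening part. I expect the flattening contributions to combine with the \(\mathcal M_1^\top\) contributions into \(x\)-derivatives or into terms containing \(\mathbf e_1\) only. Precisely, this uses the \((+)\)-symmetry, the identity \(\nabla\cdot\mathbf V_1=0\), and \eqref{eq:V1_linear_momentum}. That leaves \eqref{eq:curl_condition_V}. This bookkeeping is the main obstacle. If direct cancellation is messy, I would instead work in physical variables, where the flattening is a change of coordinates and the vorticity equation is intrinsic. Then I would identify \(\mathbf V_1\) as the physical first-order perturbation pulled back, since the flattening part is exactly the Taylor shift \(\varphi_1\partial_z\mathbf u_0\)-type correction.

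Finally, to pass from \eqref{eq:curl_condition_V} to \eqref{eq:second_order_condition}, I would use \(\nabla\cdot\mathbf V_1=0\) to write \((\mathbf V_1\cdot\nabla)\mathbf V_1=\nabla\cdot(\mathbf V_1\otimes\mathbf V_1)\). The first curl component is \(\partial_y[\nabla\cdot(V_3\mathbf V_1)]-\partial_z[\nabla\cdot(V_2\mathbf V_1)]\). The \(\partial_x\) pieces average to zero. What remains is \(\partial_y\partial_y(V_2V_3)+\partial_y\partial_z(V_3^2)-\partial_z\partial_y(V_2^2)-\partial_z^2(V_2V_3)\), which is exactly the integrand of \eqref{eq:second_order_condition}.
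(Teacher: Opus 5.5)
Your preliminary reductions are correct. The pressure and every $U\partial_x(\cdot)$ term drop out after averaging, and $w_2U'\mathbf e_1$ has zero first curl component. The bracket multiplying $\mathcal M_1^\top$ is exactly $-\nabla\mathscr P_1$, which is a neat shortcut. Your final step via $(\mathbf V_1\cdot\nabla)\mathbf V_1=\nabla\cdot(\mathbf V_1\otimes\mathbf V_1)$ is a valid, slightly cleaner replacement for the paper's integration by parts.

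The gap is the actual content of the lemma, which you leave as ``I expect'' and ``I would show''. One must prove that the cross terms between the flattening part of $\mathbf u_1$ and $\mathbf V_1$, together with the $\mathcal M_1^\top$ term, contribute nothing to the $x$-averaged first curl component. This does not happen term by term. For a single mode $\cos(\alpha x)\cos(\beta y)$ with $\beta\neq0$, the first curl component of $\partial_z\varphi_1\nabla\mathscr P_1-\partial_z\mathscr P_1\nabla\varphi_1$ reduces to $\Theta\,\partial_y\eta_1\,\partial_z^2\mathscr P_1$. Its $x$-mean is not identically zero (near $z=-d$ one has $\Theta Q_{\mathbf k}''>0$). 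So it must be cancelled by the advective cross terms, and you give no mechanism for this.

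You also leave the correction in $\mathbf u_1+\mathcal M_1\mathbf u_0$ unresolved. It is $U'\varphi_1\mathbf e_1$, not $U'\varphi_1\mathbf e_3$. The fact that it points along $\mathbf e_1$ is exactly what makes $(\mathbf u_1\cdot\nabla)(U'\varphi_1)\mathbf e_1$ vanish under the first curl component.

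The paper closes the gap with a short structural computation that needs neither the $(+)$-symmetry nor the explicit kernel of Theorem~\ref{kernel}:
\begin{itemize}
\item With $F=U\varphi_1$, the flattening part is $(F_z,0,-F_x)$, so the cross term is $(F_z\partial_x-F_x\partial_z)\mathbf V_1$.
\item Write the $\mathcal M_1^\top$ term as $\mathcal M_1^\top(U\partial_x\mathbf V_1+V_3U'\mathbf e_1)$. Its third component vanishes identically, which leaves a remainder $\mathcal R$.
\item The periodic identity $\int_0^{\lambda_1}(F_z\partial_xG-F_x\partial_zG)\,dx=\partial_z\int_0^{\lambda_1}F\,\partial_xG\,dx$ collapses $\int_0^{\lambda_1}\mathcal R\,dx$ to $\partial_z\int_0^{\lambda_1}\varphi_1\bigl(\nabla\times(U\partial_x\mathbf V_1)\bigr)_1\,dx$.
\item This vanishes pointwise, because $U\partial_x\mathbf V_1=-V_3U'\mathbf e_1-\nabla\mathscr P_1$ by \eqref{eq:V1_linear_momentum}.
\end{itemize}

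Your fallback via physical variables is sound in spirit. Expanding $\tilde{\mathbf u}\circ\Phi=\mathcal M\mathbf u$ shows that the Eulerian first-order perturbation is $\mathbf u_1+\mathcal M_1\mathbf u_0-\varphi_1U'\mathbf e_1=\mathbf V_1$. After that, the averaged vorticity equation gives \eqref{eq:curl_condition_V} at once. However, you neither verify this identification nor handle the $\varepsilon$-dependent physical domain. For that one must work on $z<-\delta$, transfer the expansion with enough derivatives, and let $\delta\to0$. As written, that route is also only a plan.
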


\begin{proof}  The proof is similar to that in \cite{SVVW2026}. For completeness, we include the details.
Let
\[
	F:=U\varphi_1,
\]
since
\[
	-\nabla\times(F\mathbf e_2)
	=
	(F_z,0,-F_x),
\]
then we have
\[
	\mathbf u_1=(F_z,0,-F_x)+\mathbf V_1.
\]
Moreover, together with
\[
	\mathcal M_1
	=
	\mathbf e_3\otimes\nabla\varphi_1
	-
	\varphi_{1,z}I,
	\qquad
	\mathbf u_0=U\mathbf e_1,
\]
one obtains
\[
	\mathbf u_1+\mathcal M_1\mathbf u_0
	=
	U'\varphi_1\mathbf e_1+\mathbf V_1,
\]
where
\[
	\mathcal M_1\mathbf u_0
	=
	-U\varphi_{1,z}\mathbf e_1
	+
	U\varphi_{1,x}\mathbf e_3,
\]
and
\[
	F_z=U'\varphi_1+U\varphi_{1,z},
	\qquad
	F_x=U\varphi_{1,x}.
\]

We now expand the quadratic terms in \eqref{eq:second_order_momentum}. First,
\begin{align*}
& (\mathbf u_1\cdot\nabla)
\left(
\mathbf u_1+\mathcal M_1\mathbf u_0
\right) \\
&=
\left[
F_z\partial_x(U'\varphi_1)
-
F_x\partial_z(U'\varphi_1)
+
(\mathbf V_1\cdot\nabla)(U'\varphi_1)
\right]\mathbf e_1 \\
&\quad
+
F_z\partial_x\mathbf V_1
-
F_x\partial_z\mathbf V_1
+
(\mathbf V_1\cdot\nabla)\mathbf V_1
\end{align*}
and
\[
	(\mathbf u_2\cdot\nabla)\mathbf u_0
	=
	w_2U'\mathbf e_1.
\]
Moreover, we have
\[
(\mathbf u_0\cdot\nabla)
\left(
\mathbf u_1+\mathcal M_1\mathbf u_0
\right)
+
(\mathbf u_1\cdot\nabla)\mathbf u_0
=
U\partial_x\mathbf V_1+V_3U'\mathbf e_1,
\]
because the terms involving \(F_x=U\varphi_{1,x}\) cancel. Hence, we obtain
\begin{align*}
&\mathcal M_1^\top
\left[
(\mathbf u_0\cdot\nabla)
\left(
\mathbf u_1+\mathcal M_1\mathbf u_0
\right)
+
(\mathbf u_1\cdot\nabla)\mathbf u_0
\right] \\
&=
-\varphi_{1,z}U'V_3\mathbf e_1
+
U\partial_xV_3\nabla\varphi_1
-
U\varphi_{1,z}\partial_x\mathbf V_1.
\end{align*}

Now take the first component of the curl of
\eqref{eq:second_order_momentum} and integrate over one \(x\)-period. The
pressure term disappears because \(\nabla\times\nabla\mathscr P_2=0\). The term
\[
(\mathbf u_0\cdot\nabla)
\left(
\mathbf u_2+\mathcal M_1\mathbf u_1+\mathcal M_2\mathbf u_0
\right)
=
U\partial_x
\left(
\mathbf u_2+\mathcal M_1\mathbf u_1+\mathcal M_2\mathbf u_0
\right)
\]
gives zero after integration in \(x\), since it is a total \(x\)-derivative. The
term \((\mathbf u_2\cdot\nabla)\mathbf u_0= w_2U'\mathbf e_1\) has zero first
curl component. The same observation removes the \(\mathbf e_1\)-component of
\((\mathbf u_1\cdot\nabla)(\mathbf u_1+\mathcal M_1\mathbf u_0)\).

Therefore the only terms left, besides
\[
\left(
\nabla\times
\left[
(\mathbf V_1\cdot\nabla)\mathbf V_1
\right]
\right)_1,
\]
are
\begin{align*}
\mathcal R
&:=
\partial_y
\left(
F_z\partial_xV_3
-
F_x\partial_zV_3
\right)
-
\partial_z
\left(
F_z\partial_xV_2
-
F_x\partial_zV_2
\right) \\
&\quad
+
\partial_z
\left(
U\varphi_{1,z}\partial_xV_2
-
U\varphi_{1,y}\partial_xV_3
\right).
\end{align*}
We claim that
\[
	\int_0^{\lambda_1}\mathcal R\,dx=0.
\]
Indeed, by integration by parts in the periodic variable \(x\), we have
\[
\int_0^{\lambda_1}
\partial_y
\left(
F_z\partial_xV_3
-
F_x\partial_zV_3
\right)
\,dx
=
\partial_z
\int_0^{\lambda_1}
\partial_y
\left(
F\partial_xV_3
\right)
\,dx,
\]
and 
\[
\int_0^{\lambda_1}
\partial_z
\left(
F_z\partial_xV_2
-
F_x\partial_zV_2
\right)
\,dx
=
\partial_z
\int_0^{\lambda_1}
\partial_z
\left(
F\partial_xV_2
\right)
\,dx.
\]
Thus, we have
\begin{align*}
\int_0^{\lambda_1}\mathcal R\,dx
&=
\partial_z
\int_0^{\lambda_1}
\Big[
\partial_y(F\partial_xV_3)
-
\partial_z(F\partial_xV_2)
+
U\varphi_{1,z}\partial_xV_2
-
U\varphi_{1,y}\partial_xV_3
\Big]dx.
\end{align*}
Since \(F=U\varphi_1\), the expression inside the bracket becomes
\[
\varphi_1
\left[
\partial_y(U\partial_xV_3)
-
\partial_z(U\partial_xV_2)
\right]
=
\varphi_1
\left(
\nabla\times
(U\partial_x\mathbf V_1)
\right)_1.
\]
Using the reduced first-order equation \eqref{eq:V1_linear_momentum}, that is 
\[
	U\partial_x\mathbf V_1
	=
	-
	V_3U'\mathbf e_1
	-
	\nabla\mathscr P_1,
\]
we get
\[
\left(
\nabla\times
(U\partial_x\mathbf V_1)
\right)_1
=
0.
\]
Hence \(\int_0^{\lambda_1}\mathcal R\,dx=0\), and therefore
\eqref{eq:curl_condition_V} follows.

It remains only to rewrite this condition using \(\nabla\cdot\mathbf V_1=0\). We have
\[
\left(
\nabla\times
\left[
(\mathbf V_1\cdot\nabla)\mathbf V_1
\right]
\right)_1
=
\partial_y(\mathbf V_1\cdot\nabla V_3)
-
\partial_z(\mathbf V_1\cdot\nabla V_2).
\]
Since \(\partial_xV_1=-(\partial_yV_2+\partial_zV_3)\), periodic integration by
parts in \(x\) gives
\[
\int_0^{\lambda_1}V_1\partial_xV_3\,dx
=
\int_0^{\lambda_1}(\partial_yV_2+\partial_zV_3)V_3\,dx,
\]
and
\[
\int_0^{\lambda_1}V_1\partial_xV_2\,dx
=
\int_0^{\lambda_1}(\partial_yV_2+\partial_zV_3)V_2\,dx.
\]
Consequently, one has
\begin{align*}
0
&=
\int_0^{\lambda_1}
\left(
\nabla\times
\left[
(\mathbf V_1\cdot\nabla)\mathbf V_1
\right]
\right)_1
\,dx \\
&=
\int_0^{\lambda_1}
\left[
\partial_y^2(V_2V_3)
-
\partial_z^2(V_2V_3)
+
\partial_y\partial_z(V_3^2-V_2^2)
\right]dx,
\end{align*}
which proves \eqref{eq:second_order_condition}.
\end{proof}

\section{Proof of Theorem~\ref{nn}}\label{sect6}

In this section, we insert the reduced kernel representation into the
quadratic solvability condition and use the resulting identity to prove that
all transverse leading-order Fourier modes vanish.
\begin{proof}[Proof of Theorem~\ref{nn}] By Theorem~\ref{kernel}, the first-order perturbation has the kernel representation \eqref{eq:kernel_eta}--\eqref{eq:kernel_u}.
By Lemma~\ref{lem:quadratic_cancellation}, the second-order solvability
condition is \eqref{eq:second_order_condition}. We now insert the Fourier
representation of \(\mathbf V_1\). From
\eqref{eq:V1_linear_momentum}, for
\(\mathbf k=(\alpha,\beta)\in\Sigma(U)\), we have
\begin{align*}
	V_{2,\mathbf k}
	&=
	-a_{\mathbf k}
	\frac{\beta Q_{\mathbf k}}{\alpha U},\\
	V_{3,\mathbf k}
	&=
	a_{\mathbf k}
	\frac{iQ_{\mathbf k}q_{\mathbf k}}{\alpha U}.
\end{align*}
Define
\[
\Sigma(U)^+
=
\left\{
\mathbf k=(\alpha,\beta)\in\Sigma(U):
\alpha>0,\ \beta>0
\right\}.
\]
Thus, using the \((+)\)-symmetry and the reality condition,
\begin{align}
	V_2
	&=
	4
	\sum_{\mathbf k\in\Sigma(U)^+}
	\frac{a_{\mathbf k}\beta Q_{\mathbf k}}{\alpha U}
	\sin(\alpha x)\sin(\beta y),
	\label{eq:V2_real_form}\\
	V_3
	&=
	-2
	\sum_{\substack{\alpha\mathbf e_1\in\Sigma(U)\\ \alpha>0}}
	\frac{a_{\alpha\mathbf e_1}Q_{\alpha\mathbf e_1}
		q_{\alpha\mathbf e_1}}
	{\alpha U}
	\sin(\alpha x)
	\nonumber\\
	&\quad
	-
	4
	\sum_{\mathbf k\in\Sigma(U)^+}
	\frac{a_{\mathbf k}Q_{\mathbf k}q_{\mathbf k}}
	{\alpha U}
	\sin(\alpha x)\cos(\beta y),
	\label{eq:V3_real_form}
\end{align}

We now project \eqref{eq:second_order_condition} onto transverse Fourier
modes. If all coefficients \(a_{\mathbf k}\) with \(\beta>0\) vanish, then
there is nothing to prove. Otherwise, let
\[
\beta_*
:=
\max
\left\{
\beta>0:
\text{there exists } \alpha>0
\text{ such that }
(\alpha,\beta)\in\Sigma(U)
\text{ and }
a_{\alpha,\beta}\neq0
\right\}.
\]
Projecting \eqref{eq:second_order_condition} onto the mode
\(\sin(2\beta_* y)\), and using the orthogonality of the \(x\)-Fourier
modes, only the self-interactions of modes
\[
\mathbf k=(\alpha,\beta_*)
\]
contribute. A direct calculation using
\eqref{eq:V2_real_form}--\eqref{eq:V3_real_form} gives
\begin{align*}
	0
	=
	-\frac{8U'}{U^3}
	\sum_{\substack{
			\mathbf k=(\alpha,\beta_*)\in\Sigma(U)^+}}
	\frac{
		a_{\mathbf k}^2\beta_*^2
	}{
		\alpha^2
	}
	Q_{\mathbf k}^2
	\left(
	\alpha^2-\beta_*^2+q_{\mathbf k}^2
	\right).
\end{align*}
which is equivalent to
\begin{align}
	U'f_*=0
	\quad\text{on }[-d,0],
	\label{eq:Uf_zero}
\end{align}
where
\begin{align*}
	f_*(z)
	:=
	\sum_{\substack{
			\mathbf k=(\alpha,\beta_*)\in\Sigma(U)^+}}
	\frac{
		a_{\mathbf k}^2\beta_*^2
	}{
		\alpha^2
	}
	Q_{\mathbf k}(z)^2
	\left(
	\alpha^2-\beta_*^2+q_{\mathbf k}(z)^2
	\right).
\end{align*}
Using
\[
Q_{\mathbf k}'=Q_{\mathbf k}q_{\mathbf k}
\]
and the Riccati equation
\[
q_{\mathbf k}'
=
2\frac{U'}{U}q_{\mathbf k}
+
|\mathbf k|^2
-
q_{\mathbf k}^2,
\]
we compute
\begin{align}
	f_*'(z)
	=
	4
	\sum_{\substack{
			\mathbf k=(\alpha,\beta_*)\in\Sigma(U)^+}}
	\frac{
		a_{\mathbf k}^2\beta_*^2
	}{
		\alpha^2
	}
	Q_{\mathbf k}(z)^2
	q_{\mathbf k}(z)
	\left(
	\alpha^2
	+
	\frac{U'(z)}{U(z)}q_{\mathbf k}(z)
	\right).
	\label{eq:fstar_derivative}
\end{align}

We now prove that the existence of a nonzero coefficient with
\(\beta>0\) forces \(U\) to be constant. Let
\[
V
:=
\left\{
z\in(-d,0):
U'(z)=0
\right\}.
\]
First, \(V\) is nonempty. Indeed, since
\[
q_{\mathbf k}(-d)=0,
\]
continuity implies that there exists a small interval
\[
I_\delta=(-d,-d+\delta)
\]
such that
\[
\alpha^2
+
\frac{U'}{U}q_{\mathbf k}
\ge
\frac12\alpha^2
\]
on \(I_\delta\), for all finitely many modes appearing in
\eqref{eq:fstar_derivative}. Hence
\[
f_*'(z)>0
\quad\text{on }I_\delta,
\]
because at least one coefficient \(a_{\mathbf k}\) with
\(\beta=\beta_*\) is nonzero. Therefore \(f_*\) is not identically zero
on \(I_\delta\). By \eqref{eq:Uf_zero}, this implies
\[
U'=0
\quad\text{on a nonempty subinterval of }I_\delta.
\]
Thus \(V\neq\emptyset\).

Next, let \(z_0\in V\). Then \(U'(z_0)=0\), and
\eqref{eq:fstar_derivative} gives
\[
f_*'(z_0)
=
4
\sum_{\substack{
		\mathbf k=(\alpha,\beta_*)\in\Sigma(U)^+}}
\frac{
	a_{\mathbf k}^2\beta_*^2
}{
	\alpha^2
}
Q_{\mathbf k}(z_0)^2
q_{\mathbf k}(z_0)
>0,
\]
where we have used \(q_{\mathbf k}(z)>0\) for \(z\in(-d,0]\). Hence
\(f_*\) is nonzero in a punctured neighbourhood of \(z_0\). By
\eqref{eq:Uf_zero}, \(U'\) must vanish on that punctured neighbourhood,
and therefore, by continuity, in a full neighbourhood of \(z_0\). Thus
\(V\) is open in \((-d,0)\). Since \(V\) is also closed and nonempty, we
conclude that
\[
V=(-d,0).
\]
Therefore \(U'\equiv0\), contradicting the assumption that \(U\) is
non-constant.

Consequently, all coefficients with \(\beta>0\) vanish, that is
\[
a_{\alpha,\beta}=0
\qquad
\text{whenever }(\alpha,\beta)\in\Sigma(U)
\text{ and }\beta>0.
\]
By the \((+)\)-symmetry, the same conclusion holds for all
\(\beta\neq0\). Hence the leading-order surface and pressure modes are
independent of the transverse variable \(y\), and the transverse velocity
component vanishes:
\[
\partial_y\eta_1=0,
\qquad
\partial_y\mathscr P_1=0,
\qquad
V_2=0.
\]
Returning to the decomposition
\[
\mathbf u_1
=
-\nabla\times(U\varphi_1\mathbf e_2)
+
\mathbf V_1,
\]
we also obtain
\[
v_1=0,
\qquad
\partial_yw_1=0.
\]
It remains to remove the possible \(x\)-independent mode in the first
horizontal velocity component. By Theorem~\ref{kernel}, this mode is exactly
the term \(h_0(y,z)\mathbf e_1\). By the additional normalization assumption and the kernel representation
\eqref{eq:kernel_u},
\begin{align*}
	0
	&=
	\int_0^{\lambda_1}u_1(x,y,z)\,dx  \\
	&=
	\lambda_1 h_0(y,z)
	+
	\sum_{\mathbf k=(\alpha,\beta)\in\Sigma(U)}
	a_{\mathbf k}X_{\mathbf k}(z)\ep^{i\beta y}
	\int_0^{\lambda_1}\ep^{i\alpha x}\,dx .
\end{align*}
Since every \(\mathbf k=(\alpha,\beta)\in\Sigma(U)\) satisfies
\(\alpha\neq0\), we may write
\[
\alpha=\frac{2\pi n}{\lambda_1}
\qquad
\text{for some } n\in\mathbb Z\setminus\{0\}.
\]
Therefore, we have
\[
\int_0^{\lambda_1}\ep^{i\alpha x}\,dx
=
\frac{\ep^{i\alpha\lambda_1}-1}{i\alpha}
=
\frac{\ep^{i2\pi n}-1}{i\alpha}
=
0.
\]
Hence all resonant Fourier modes have zero \(x\)-average, and the
normalization assumption gives
\[
0
=
\int_0^{\lambda_1}u_1(x,y,z)\,dx
=
\lambda_1 h_0(y,z).
\]
Thus \(h_0(y,z)=0\). Therefore the possible \(x\)-independent mode is
absent. Since the preceding argument has already ruled out all nonzero
Fourier modes with transverse wave number \(\beta\neq0\), the remaining
first-order terms depend only on \(x\) and \(z\). Thus, we obtain
\[
\partial_y u_1
=
\partial_y v_1
=
\partial_y w_1
=
0.
\]
This completes the proof.
\end{proof}

  \section*{Acknowledgments}
  Yang was supported by the National Natural Science Foundation of China (Grant No. 12561101).   	

\section*{Data availability }
No data was used for the research described in the article.

\section*{Conflict of interest }
The authors do not have any other competing interests to declare.


\begin{thebibliography}{99}
		
\bibitem{r6}
Ahmad R, Groves M D, Nilsson D. A resonant Lyapunov centre theorem with an application to doubly periodic travelling hydroelastic waves. Journal of Nonlinear Science, 2024, 34(6): 104.


\bibitem{r19}
Blyth M G, P\v{a}r\v{a}u E I, Wang Z.  Stability of hydroelastic waves in deep water. Water Waves, 2024, 6: 169-189.


\bibitem{Bonnefoy2009}
Bonnefoy F, Meylan M H, Ferrant P. Nonlinear higher-order spectral solution for a two-dimensional moving load on ice. Journal of Fluid Mechanics, 2009, 621: 215-242.

\bibitem{r10}
Chen R M, Fan L, Walsh S, Wheeler M H. Rigidity of three-dimensional internal waves with constant vorticity. Journal of Mathematical Fluid Mechanics, 2023, 25(3): 71.

\bibitem{r7}
Constantin A. An exact solution for equatorially trapped waves. Journal of Geophysical Research: Oceans, 2012, 117(C5).

\bibitem{r9}
Constantin A. Edge waves along a sloping beach. Journal of Physics A: Mathematical and General, 2001, 34(45): 9723-9731.

\bibitem{r11}
Constantin A. Two-dimensionality of gravity water flows of constant nonzero vorticity beneath a surface wave train. European Journal of Mechanics - B/Fluids, 2011, 30(1): 12-16.

\bibitem{Forbes1986}
Forbes L K. Surface waves of large amplitude beneath an elastic sheet. Part 1. High-order series solution. Journal of Fluid Mechanics, 1986, 169: 409-428.

\bibitem{Forbes1988}
Forbes L K. Surface waves of large amplitude beneath an elastic sheet. Part 2. Galerkin solution. Journal of Fluid Mechanics, 1988, 188: 491-508.

\bibitem{Gao2014}
Gao T, Vanden-Broeck J M. Numerical studies of two-dimensional hydroelastic periodic and generalised solitary waves. Physics of Fluids, 2014, 26(8).

\bibitem{Gao2016}
Gao T, Wang Z, Vanden-Broeck J M. New hydroelastic solitary waves in deep water and their dynamics. Journal of Fluid Mechanics, 2016, 788: 469-491.

\bibitem{Guyenne2012}
Guyenne P, P\u{a}r\u{a}u E I. Computations of fully nonlinear hydroelastic solitary waves on deep water. Journal of Fluid Mechanics, 2012, 713: 307-329.

\bibitem{Guyenne2014}
Guyenne P, P\u{a}r\u{a}u E I. Finite-depth effects on solitary waves in a floating ice sheet. Journal of Fluids and Structures, 2014, 49: 242-262.


\bibitem{r8}
Henry D. On three-dimensional Gerstner-like equatorial water waves. Philosophical Transactions of the Royal Society A: Mathematical, Physical and Engineering Sciences, 2018, 376(2111): 20170088.

\bibitem{r14}
Iooss G, Plotnikov P I. Asymmetrical three-dimensional travelling gravity waves. Archive for Rational Mechanics and Analysis, 2011, 200(3): 789-880.

\bibitem{r15}
Iooss G, Plotnikov P I. Small divisor problem in the theory of three-dimensional water gravity waves. American Mathematical Society, 2009.

\bibitem{r2}
Jain A K. Review of flexible risers and articulated storage systems. Ocean engineering, 1994, 21(8): 733-750.

\bibitem{zamp}Li J, Yang S. Energy relation for hydroelastic waves. Zeitschrift für angewandte Mathematik und Physik, 2025, 76(5): 205.

\bibitem{Milewski2011}
Milewski P A, Vanden-Broeck J M, Wang Z. Hydroelastic solitary waves in deep water. Journal of Fluid Mechanics, 2011, 679: 628-640.

\bibitem{Milewski2013}
Milewski P A, Vanden-Broeck J M, Wang Z. Steady dark solitary flexural gravity waves. Proceedings of the Royal Society A, 2013, 469(2150): 20120485.

\bibitem{r3}
Nevel D E. Moving loads on a floating ice sheet. US Army Cold Regions Research \& Engineering Laboratory, 1970.

\bibitem{Parau2002}
P\u{a}r\u{a}u E, Dias F. Nonlinear effects in the response of a floating ice plate to a moving load. Journal of Fluid Mechanics, 2002, 460: 281-305.

\bibitem{PT2011}
Plotnikov P I, Toland J F. Modelling nonlinear hydroelastic waves. Philosophical Transactions of the Royal Society A: Mathematical, Physical and Engineering Sciences, 2011, 369(1947): 2942-2956.

\bibitem{r13}
Reeder J, Shinbrot M. Three-dimensional, nonlinear wave interaction in water of constant depth. Nonlinear Analysis: Theory, Methods \& Applications, 1981, 5(3): 303-323.

\bibitem{r1}
Seth D S. Internal doubly periodic gravity-capillary waves with vorticity. SIAM Journal on Mathematical Analysis, 2024, 56(6): 7545-7575.

\bibitem{SVW2024}
Seth D S, Varholm K, Wahl\'en E. Symmetric doubly periodic gravity-capillary waves with small vorticity. Advances in Mathematics, 2024, 447: 109683.

\bibitem{SVVW2026}
Seth D S, Varholm K, Wahl\'en E, Weber J. Rigidity of symmetric doubly-periodic water waves near shear flows. Nonlinearity, 2026, 39(3): 035015.

\bibitem{r4}
Squire V A, Robinson W H, Langhorne P J, Haskell T G. Vehicles and aircraft on floating ice. Nature, 1988, 333(6169): 159-161.

\bibitem{r5}
Takizawa T. Deflection of a floating sea ice sheet induced by a moving load. Cold Regions Science and Technology, 1985, 11(2): 171-180.

\bibitem{Toland2007}
Toland J. Heavy hydroelastic travelling waves. Proceedings of the Royal Society A, 2007, 463(2085): 2371-2397.

\bibitem{Toland2008}
Toland J F. Steady periodic hydroelastic waves. Archive for Rational Mechanics and Analysis, 2008, 189(2): 325-362.

\bibitem{VandenBroeck2011}
Vanden-Broeck J M, P\u{a}r\u{a}u E I. Two-dimensional generalized solitary waves and periodic waves under an ice sheet. Philosophical Transactions of the Royal Society A, 2011, 369(1947): 2957-2972.

\bibitem{r12}
Wahl\'en E. Non-existence of three-dimensional travelling water waves with constant non-zero vorticity. Journal of Fluid Mechanics, 2014, 746: R2.

\bibitem{R1}
Yang J. Cubic lifespan of two-dimensional hydroelastic waves. Communications in Mathematical Sciences, 2024, 23(1): 259-277.
	\end{thebibliography}
\end{document}